\documentclass[11pt, reqno]{amsart}
\usepackage{amssymb}
\usepackage{bbm}
\usepackage{amsfonts}
\usepackage{amsmath}
\usepackage{leftidx}
\usepackage{extarrows}
\usepackage[all]{xy}
\usepackage{cases}
\usepackage{txfonts}
\usepackage{mathrsfs}
\usepackage{epic}
\usepackage{dsfont}

\newtheorem{proposition}{Proposition}[section]
\newtheorem{lemma}[proposition]{Lemma}

\newtheorem{theorem}[proposition]{Theorem}
\newtheorem{remark}[proposition]{Remark}

\theoremstyle{definition}
\newtheorem{definition}[proposition]{Definition}

\def\<{\leqslant}
\def\>{\geqslant}

\date{}

\begin{document}
\title[Notes on gamma invariants of finite dimensional Hopf algebras]{Notes on gamma invariants of finite dimensional Hopf algebras}
\author{Xinru Zhang}
\address{Department of Mathematics $\&$ Statistics, University of Hasselt/School of Mathematical Science, Yangzhou University,Yangzhou 225002, China}
\email{xinru.zhang@uhasselt.be}
\author{Libin Li}
\address{School of Mathematical Science, Yangzhou University,
	Yangzhou 225002, China}
\email{lbli@yzu.edu.cn}
\author{Yinhuo Zhang}
\address{Department of Mathematics $\&$  Statistics, University of Hasselt, Universitaire Campus, 3590 Diepenbeek, Belgium
	Yangzhou 225002, China}
\email{yinhuo.zhang@uhasselt.be}
\thanks{2020 {\it Mathematics Subject Classification}. 16T05, 46J99}
\keywords{Hopf algebra, Green ring, gamma invariant}

\begin{abstract}
Let $H$ be a finite-dimensional, non-semisimple Hopf algebra over an algebraically closed field $\mathbf{k}$. This paper investigates the asymptotic behavior of the core of left $H$-modules through the lens of the gamma invariant $\gamma_{\mathfrak{X}}$ relative to a representation ideal $I_{\mathfrak{X}}$. We establish an equivalent characterization for the quotient of the Green ring $R_{\mathfrak{X}}$ to be a transitive fusion ring, demonstrating that transitivity is synonymous with the non-degeneracy of a naturally induced bilinear form and the collapse of the ideals $\mathcal{P}_{+}$, $\mathcal{P}_{-}$ and $I_{\operatorname{max}}$ into a single ideal. Furthermore, we prove that the Green ring exhibits the structure of a representation ring in the sense of Benson, provided that the square of the antipode is an inner automorphism and the equality $I_{\operatorname{max}}=I_{\operatorname{proj}}$ holds. As an explicit application of these frameworks, we analyze the Drinfeld double $D(H_4)$ of the Sweedler algebra, identifying an infinite family of distinct representation ideals and proving that the maximal gamma invariant $\gamma_{\operatorname{max}}$ induces a genuine ring homomorphism. Finally, for Hopf algebras of finite representation type under the assumption $\mathcal{P}_{+} = \mathcal{P}_{-} = I_{\operatorname{max}}$, we show that $\gamma_{\operatorname{max}}$ coincides precisely with the Frobenius--Perron dimension, and we explicitly compute the gamma invariants for the standard basis elements of the Green ring of the Taft algebra $H_n(q)$.
\end{abstract}
\maketitle

\section*{Introduction}\label{sec:intro}

The gamma invariant is a powerful asymptotic tool used to measure the ``size" of the non-projective component of a module as it is subjected to repeated tensor powers. This invariant is particularly significant in the representation theory of finite-dimensional Hopf algebras, where the category of modules is frequently non-semisimple, meaning that generic representations cannot be decomposed into a direct sum of simple components.

The study of representation rings, or Green rings, of finite-dimensional Hopf algebras provides a fundamental framework for understanding the structural properties of monoidal categories and their underlying representations \cite{Etingof1, Montgomery1, Radford1}. In scenarios where the underlying Hopf algebra $H$ is non-semisimple, the associated Green ring $r(H)$ encapsulates intricate homological and combinatorial data regarding both projective and non-projective modules \cite{Benson-Cohomology, Chen2, Li1, Wang-Green}.

A central challenge in this domain is quantifying the asymptotic growth and behavior of the non-projective portion of a module $V$ under successive tensor products. Following the seminal work of Benson and Symonds \cite{Benson-Symonds}, the gamma invariant $\gamma_{\operatorname{proj}}(x)$ was introduced to analyze the growth of the non-projective ``core" of a module relative to the projective ideal $I_{\operatorname{proj}}$ (the ideal generated by the isomorphism classes of projective modules). In \cite{Benson-Banach}, Benson extended this formulation by replacing $I_{\operatorname{proj}}$ with an arbitrary representation ideal $I_{\mathfrak{X}}$. This invariant is formally defined as the limit of the $n$-th root of the dimension of the core of the $n$-th tensor power of an element:
$$\gamma_{\mathfrak{X}}(x) := \lim_{n \to \infty} \sqrt[n]{\dim_{\mathbf{k}} \operatorname{core}_{\mathfrak{X}}(x^n)}.$$

As established in recent literature \cite{Benson-Banach}, the gamma invariant can be naturally interpreted as the spectral radius of the image of an element within the completion of a suitable quotient ring. This connection elegantly bridges the gap between algebraic representation theory and the classical theory of commutative Banach algebras. Furthermore, Benson proposed an axiomatic definition of a ``representation ring," characterizing the precise conditions under which these rings exhibit predictable behavior with respect to duality and tensor products.

Building upon the classification of representation ideals—such as the unique minimal projective ideal $I_{\operatorname{proj}}$ and the maximal representation ideal $I_{\operatorname{max}}$—the present paper explores the conditions under which the quotient of the Green ring constitutes a transitive ring. Specifically, we demonstrate that under certain natural hypotheses, such as the square of the antipode $S^2$ being an inner automorphism and the equality $I_{\operatorname{max}} = I_{\operatorname{proj}}$ holding, the Green ring of a Hopf algebra satisfies the axioms of a Benson representation ring. We further show that the gamma invariant induces a well-defined ring homomorphism, providing a powerful linear-algebraic tool for the study of non-semisimple Hopf algebras.

The paper is organized as follows:

 In Section~1, we review the foundational definitions of the Green ring $r(H)$ and its canonical duality operator. We formally define representation ideals $I_{\mathfrak{X}}$ and the gamma invariant $\gamma_{\mathfrak{X}}(x)$, which quantifies the asymptotic growth of the core of a module.
 
 In Section~2, we investigate the relationship between the bilinear form on the quotient ring $R_{\mathfrak{X}}$ and the structural properties of the algebra. We establish an equivalent characterization showing that the non-degeneracy of this bilinear form is synonymous with the transitivity of the quotient ring and the collapse of the representation ideals $\mathcal{P}_{+}$, $\mathcal{P}_{-}$, and $I_{\operatorname{max}}$ into a single ideal. Furthermore, we introduce the abstract framework of a Benson representation ring $R$. We prove that if the antipode squared $S^2$ is an inner automorphism and $I_{\operatorname{max}} = I_{\operatorname{proj}}$, the Green ring $r(H)$ satisfies Benson's axioms for a representation ring.

 In Section~3, we provide a concrete demonstration of these concepts using the Drinfeld double $D(H_4)$ of the Sweedler algebra. Utilizing Chen's classification \cite{Chen1, Chen2} of the indecomposable left $D(H_4)$-modules including simple, projective, string, and band modules, we explicitly show the existence of infinitely many distinct representation ideals situated strictly between the projective ideal and the maximal ideal. Crucially, we prove that for $D(H_4)$, the gamma invariant $\gamma_{\operatorname{max}}$ induces a genuine ring homomorphism.

 Finally, in Section~4, we focus our attention on Hopf algebras of finite representation type. We demonstrate that under the assumption $\mathcal{P}_{+} = \mathcal{P}_{-} = I_{\operatorname{max}}$, the gamma invariant coincides precisely with the Frobenius--Perron dimension. Moreover, we explicitly compute the gamma invariants for the standard bases in the Green ring of the Taft algebra $H_n(q)$.

\section{Preliminaries}\label{1}

Throughout this paper, we assume that $H$ is a finite dimensional   Hopf algebra over an algebraically closed field $\mathbf{k}$. Unless otherwise specified, all modules are assumed to be finite-dimensional left $H$-modules. For the foundational theory of Hopf algebras and their representations, we refer the reader to \cite{Montgomery1, Radford1}.

The \textit{Green ring} (or representation ring) $r(H)$ of $H$ is the abelian group generated by the isomorphism classes $[V]$ of modules $V$ in $\operatorname{mod}(H)$, subject to the relations $[U \oplus V] = [U] + [V]$, where $\operatorname{mod}(H)$ denotes the category of finite-dimensional left $H$-modules. The multiplication in $r(H)$ is induced by the internal tensor product, $[U][V] := [U \otimes V]$, rendering $r(H)$ an associative ring. The set $\operatorname{ind}(H)$, comprising the isomorphism classes of all indecomposable left $H$-modules, forms a $\mathbb{Z}$-basis for $r(H)$.

Let $Z$ be an indecomposable $H$-module. Following Auslander \cite{Auslander1}, we define the element $\delta_{[Z]} \in r(H)$ as follows:
\begin{itemize}
    \item If $Z$ is non-projective, $\delta_{[Z]} := [X] - [Y] + [Z]$, where $0 \to X \to Y \to Z \to 0$ is the unique almost split sequence ending in $Z$.
    \item If $Z$ is projective, $\delta_{[Z]} := [Z] - [\operatorname{rad} Z]$.
\end{itemize}
Let $*$ denote the involution on $r(H)$ induced by the linear duality functor, so that $[Z]^* = [Z^*]$. We employ the associative bilinear form introduced in \cite{Wang1}, defined by $([X], [Y]) = \dim_{\mathbf{k}} \operatorname{Hom}_H(X, Y^*)$ for any $H$-modules $X$ and $Y$. This map uniquely extends to a $\mathbb{Z}$-bilinear form on $r(H)$ that satisfies the identity $([X], [Y]) = ([Y]^{**}, [X])$ (see \cite[Lemma 4.1]{Wang1}). Recall that the evaluation map for $X$ is the canonical $H$-morphism $\operatorname{ev}_X : X^* \otimes X \to \mathbf{k}$ given by $\operatorname{ev}_X(f \otimes x) = f(x)$.

To simplify notation, we set $R := r(H)$ and denote its basis by $\{x_i\}_{i \in \mathfrak{I}} = \operatorname{ind}(H)$, where $x_0 = [\mathbf{k}] = \mathbf{1}$ represents the isomorphism class of the trivial left $H$-module $\mathbf{k}$. For each index $i \in \mathfrak{I}$, let $i^*$ denote the unique index such that $x_{i^*} = x_i^*$. Given any element $x = \sum_{i \in \mathfrak{I}} a_i x_i \in R$, we denote the coefficient $a_i$ by $[x : x_i]$. An element $x \in R$ is said to be \textit{non-negative} if $[x : x_i] \geq 0$ for all $i \in \mathfrak{I}$. The sets of non-negative and positive elements are denoted by $R_{\geq 0}$ and $R_{> 0}$, respectively. Following \cite{Wang1}, we define the subsets
\[
\mathcal{P}_+ := \mathbb{Z} \{ x_i \mid i \in \mathfrak{I}, [x_i x_i^* : \mathbf{1}] = 0 \} \quad \text{and} \quad \mathcal{P}_- := \mathbb{Z} \{ x_i \mid i \in \mathfrak{I}, [x_i^* x_i : \mathbf{1}] = 0 \}.
\]
As established in \cite{Wang1}, $\mathcal{P}_+$ and $\mathcal{P}_-$ form right and left ideals of $R$, respectively.

Benson \cite{Benson-Banach} introduced the concept of a representation ideal within this framework. We adapt this definition to $R$ as follows:

\begin{definition}\label{def:rep-ideal}
An ideal $I_{\mathfrak{X}}$ of $R$ is called a \textit{representation ideal} if it is spanned as a $\mathbb{Z}$-submodule by a subset of the basis $\{x_i\}_{i \in \mathfrak{X}}$ for some proper subset $\mathfrak{X} \subsetneq \mathfrak{I}$ and is closed under the duality operator $*$.
\end{definition}

We denote by $I_{\operatorname{proj}}$ the \textit{projective ideal} of $R$, which is spanned by the isomorphism classes of all projective left $H$-modules. A straightforward verification shows that the regular module class $x_{\operatorname{reg}}$ satisfies the relation $x x_{\operatorname{reg}} = \dim_{\mathbf{k}}(x) x_{\operatorname{reg}}$ for all $x \in R$. Consequently, since $H$ is assumed to be non-semisimple, $I_{\operatorname{proj}}$ is the unique minimal representation ideal of $R$. 

Let $I_{\operatorname{max}}$ denote the maximal representation ideal of $R$. It follows directly from the definitions of $\mathcal{P}_+$ and $\mathcal{P}_-$ that $I_{\operatorname{max}} \subseteq \mathcal{P}_+ \cap \mathcal{P}_-$. In particular, if $\mathcal{P}_+$ and $\mathcal{P}_-$ coincide, then both ideals are equal to $I_{\operatorname{max}}$. Let $I_{\mathbb{C},\mathfrak{X}} := \mathbb{C} \otimes_{\mathbb{Z}} I_{\mathfrak{X}}$. We define the quotient ring $R_{\mathfrak{X}} := R/I_{\mathfrak{X}}$ and its complexification $R_{\mathbb{C}, \mathfrak{X}} := \mathbb{C} \otimes_{\mathbb{Z}} R_{\mathfrak{X}}$. In particular, we adopt the following shorthand notation for the maximal and projective configurations:
\[
R_{\operatorname{max}} := R/I_{\operatorname{max}}, \quad R_{\operatorname{proj}} := R/I_{\operatorname{proj}},
\]
with $R_{\mathbb{C},\operatorname{max}}$ and $R_{\mathbb{C},\operatorname{proj}}$ denoting their respective complexifications.

Following Benson and Symonds \cite{Benson-Symonds}, we investigate the \textit{gamma invariant} of a non-semisimple Hopf algebra. Let $I_{\mathfrak{X}}$ be a representation ideal of $R$. For any non-negative element $x \in R_{\geq 0}$, we can uniquely decompose it as $x = x' + x''$, where $x' = \sum_{i \in \mathfrak{I} \setminus \mathfrak{X}} a_i x_i$ and $x'' = \sum_{i \in \mathfrak{X}} a_i x_i$. We define the \textit{core} of $x$ relative to $I_{\mathfrak{X}}$ by setting $\operatorname{core}_{\mathfrak{X}}(x) := x'$. Furthermore, let $c_n^{\mathfrak{X}}(x) := \dim_{\mathbf{k}}(\operatorname{core}_{\mathfrak{X}}(x^n))$, where the dimension of a core element is understood as the linear combination of the dimensions of its constituent modules. The \textit{gamma value} of $x$ is defined as the limit superior:
\[
\gamma_{\mathfrak{X}}(x) := \limsup_{n \to \infty} \sqrt[n]{c_n^{\mathfrak{X}}(x)}.
\]
For any $x \in R_{\geq 0} \setminus I_{\mathfrak{X}}$, the sequence $\{c_n^{\mathfrak{X}}(x)\}_{n=1}^{\infty}$ is submultiplicative. Hence, by Fekete's Lemma, the limit exists and satisfies $\gamma_{\mathfrak{X}}(x) = \lim_{n \to \infty} \sqrt[n]{c_n^{\mathfrak{X}}(x)}$. If $x \in R_{\geq 0} \cap I_{\mathfrak{X}}$, then $\gamma_{\mathfrak{X}}(x) = 0$. We denote these invariants by $\gamma_{\operatorname{proj}}(x)$ and $\gamma_{\operatorname{max}}(x)$ when specialized to the projective and maximal ideals, respectively.

Finally, let $A$ be a unital Banach algebra. For any element $a \in A$, the \textit{spectrum} $\operatorname{Spec}_A(a)$ is defined as the set of scalars $\lambda \in \mathbb{C}$ such that $a - \lambda \mathbf{1}$ is not invertible in $A$. The associated \textit{spectral radius} is denoted by $\rho(a) := \sup \{ |\lambda| \mid \lambda \in \operatorname{Spec}_A(a) \}$.


\section{Representation ideals and gamma invariants for Hopf algebras}\label{2}

Assume that $I_{\mathfrak{X}}$ is a representation ideal of $R$. For any $x \in R$, let $\overline{x}$ denote its image in the quotient ring $R_{\mathfrak{X}} = R/I_{\mathfrak{X} }$. Following \cite{Wang1}, we define a bilinear form on $R_{\mathfrak{X}}$ by:
\[ [\overline{x}, \overline{y}]_{\mathfrak{X}} := [xy : \mathbf{1}], \quad \text{for } x, y \in R. \]
This form is associative and satisfies the $*$-symmetry property: $[\overline{x}, \overline{y}]_{\mathfrak{X}} = [\overline{y}^*, \overline{x}^*]_{\mathfrak{X}}$.

The following lemma, which characterizes the structure constants, is adapted from Theorems 2.7 and 4.6, Lemma 4.2, and Corollary 4.7 in \cite{Wang1}. These results remain valid for finite-dimensional Hopf algebras of infinite representation type.

\begin{lemma}\label{2.1}
The following properties hold in $R$:
\begin{enumerate}
    \item If $[x_i x_j : \mathbf{1}] > 0$, then $j = i^*$, $i = i^{**}$, and $[x_i x_j : \mathbf{1}] = 1$.
    \item $(\delta_{x_i}^*, x_j) = \delta_{i,j}$, where $\delta_{i,j}$ is the Kronecker delta.
    \item $[x_i x_{i^*} : \mathbf{1}] \neq 0$ if and only if $\delta_{x_0}x_i = \delta_{x_i}$.
\end{enumerate}
\end{lemma}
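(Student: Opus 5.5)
The basic tool is the adjunction for a finite-dimensional Hopf algebra, together with the fact that $\mathbf{k}$ is simple and one-dimensional. For modules $X,Y$ we have
\[
\operatorname{Hom}_H(X\otimes Y,\mathbf{k})\cong \operatorname{Hom}_H(X,Y^*),
\]
and the multiplicity $[x_ix_j:\mathbf{1}]$ counts direct summands of $X_i\otimes X_j$ isomorphic to $\mathbf{k}$. Such a summand corresponds to a pair consisting of a split epimorphism $X_i\otimes X_j\to\mathbf{k}$ and a splitting $\mathbf{k}\to X_i\otimes X_j$.

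\textbf{Part (1).} Start from $[x_ix_j:\mathbf{1}]>0$. Then there is a surjection $X_i\otimes X_j\to \mathbf{k}$, and it corresponds to a nonzero map $\phi:X_i\to X_j^*$. There is also a nonzero $H$-map $\mathbf{k}\to X_i\otimes X_j$, which corresponds (using ${}^*X$ adjunctions) to a nonzero map $\psi:X_j^*\to X_i$, after identifying $\operatorname{Hom}(\mathbf{k},X_i\otimes X_j)\cong\operatorname{Hom}(X_j^*,X_i)$ via the left/right dual.

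The splitting condition says the composite $\mathbf{k}\to X_i\otimes X_j\to\mathbf{k}$ is nonzero, and this translates into a trace condition on $\phi\circ\psi$ (twisted by $S^2$). In particular, $\phi\psi$ is not nilpotent. Since $X_j^*$ is indecomposable, its endomorphism ring is local, so $\phi\psi$ is an isomorphism. Hence $X_i\cong X_j^*$ as a direct summand, which gives $j=i^*$.

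Applying the same argument with the roles of the factors swapped, using the other adjunction $\operatorname{Hom}(X\otimes Y,\mathbf{k})\cong\operatorname{Hom}(Y,{}^*X)$, yields $X_j\cong {}^*X_i$. Combining the two gives $X_i\cong X_i^{**}$. For multiplicity one: if $\mathbf{k}^2$ were a summand, we would obtain two maps $\phi_1,\phi_2$ and $\psi_1,\psi_2$ whose pairing matrix of traces is invertible. But $\operatorname{End}(X_i)$ is local with radical consisting of trace-zero (nilpotent) elements, so the trace pairing factors through $\operatorname{End}/\operatorname{rad}\cong\mathbf{k}$, which has rank $1$. This contradiction gives $[x_ix_j:\mathbf{1}]=1$.

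\textbf{Part (2).} This is Auslander's defining property of almost split sequences, transported through the form $(-,-)$. The key fact is that $\delta_{[Z]}$ is built so that
\[
\dim\operatorname{Hom}_H(\delta_{[Z]},M)=\delta_{Z,M}\quad\text{for } M \text{ indecomposable},
\]
obtained by applying $\operatorname{Hom}(-,M)$ to the almost split sequence, or to $0\to\operatorname{rad}Z\to Z$ in the projective case. The image of the resulting left-exact sequence is exactly the non-isomorphisms, so the cokernel is $\operatorname{End}(Z)/\operatorname{rad}$ when $M\cong Z$ and $0$ otherwise. Then I compute
\[
(\delta_{x_i}^*,x_j)=\dim\operatorname{Hom}(\delta_{x_i}^*,X_j^*)=\dim\operatorname{Hom}(X_j^{**},\delta_{x_i}^{**}).
\]
Alternatively, I rewrite this as $\dim\operatorname{Hom}(\delta_{x_i},X_j)$ using the duality equivalence $\operatorname{Hom}(A^*,B^*)\cong\operatorname{Hom}(B,A)$, and rearrange to reach $\dim\operatorname{Hom}(\delta_{x_i},X_j)=\delta_{ij}$.

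\textbf{Part (3).} I rewrite $[x_ix_{i^*}:\mathbf{1}]$ using part (2) by pairing $x_ix_{i^*}$ against $\delta_{x_0}^*$. By associativity of the form, this equals $(\delta_{x_0}^*x_i,\,x_{i^*})$-type expressions. Both directions then reduce to comparing the coefficient of $\delta$-elements.

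If $\delta_{x_0}x_i=\delta_{x_i}$, pairing with $x_i$ gives $1$, so the multiplicity is nonzero. Conversely, if the multiplicity is nonzero, tensoring the almost split sequence ending at $\mathbf{k}$ with $X_i$ still yields an almost split sequence ending at $X_i$; this is Auslander–Reiten's criterion, valid precisely when $\mathbf{k}$ is a summand of $X_i\otimes X_i^*$, i.e.\ when $\operatorname{ev}$ splits. I expect this step to be the main obstacle, since it requires verifying that $-\otimes X_i$ preserves the almost split property using the splitting of the evaluation map. The projective case for $\mathbf{k}$ does not arise because $H$ is non-semisimple.
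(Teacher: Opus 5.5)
The paper gives no argument for this lemma; it imports it from Wang's work. Your proposal therefore has to stand on its own. The overall strategy (adjunction, trace pairing, Auslander's $\delta$) is the standard one, but part (1) has a genuine gap.

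\textbf{Part (1).} The identification $\operatorname{Hom}_H(\mathbf{k},X_i\otimes X_j)\cong\operatorname{Hom}_H(X_j^*,X_i)$ is false in general. The $H$-linear coevaluation you need is $\mathbf{k}\to{}^*X_j\otimes X_j$, not $\mathbf{k}\to X_j^*\otimes X_j$. The correct statement is $\operatorname{Hom}_H(\mathbf{k},X_i\otimes X_j)\cong\operatorname{Hom}_H({}^*X_j,X_i)$, where ${}^*X_j$ is the dual with action through $S^{-1}$. Since ${}^*Y\cong Y^*$ iff $Y\cong Y^{**}$, you cannot identify the two before proving $**$-invariance. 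Hence $\phi\psi$ is a map ${}^*X_j\to X_j^*$, i.e.\ an $H$-map ${}_{S^{-2}}W\to W$ with $W=X_j^*$, not an element of $\operatorname{End}_H(X_j^*)$.

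This much is repairable. Kernels and images of powers of such a twisted map are still $H$-submodules of $W$. So Fitting's lemma shows that a twisted map of nonzero trace is bijective, and the same remark rescues your rank-one argument for multiplicity one.

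What is not repaired is your route to $i=i^{**}$. The statement $X_j\cong{}^*X_i$ is equivalent to $X_i\cong X_j^*$, because $({}^*X)^*\cong X$, so swapping the factors yields nothing new. Also, $X_i\cong X_j^*$ says $i=j^*$, not $j=i^*$. The missing idea is that the split epimorphism and the split monomorphism give isomorphisms from the two \emph{different} duals: $X_i\cong X_j^*$ and $X_i\cong{}^*X_j$. The second gives $X_j\cong({}^*X_j)^*\cong X_i^*$, i.e.\ $j=i^*$. Combining, $X_i^{**}\cong X_j^*\cong X_i$.

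\textbf{Part (2).} The variance is reversed twice. Applying $\operatorname{Hom}_H(-,M)$ to $0\to\tau Z\to E\to Z\to 0$ puts the defect at $\operatorname{Hom}_H(\tau Z,M)$. So $\dim\operatorname{Hom}_H(\delta_{[Z]},M)=\delta_{M,\tau Z}$, not $\delta_{M,Z}$, and the projective case is likewise wrong. Auslander's formula is $\dim\operatorname{Hom}_H(M,\delta_{[Z]})=\delta_{M,Z}$, obtained from $\operatorname{Hom}_H(M,-)$ and the right almost split property. The duality you quote gives $(\delta_{x_i}^*,x_j)=\dim\operatorname{Hom}_H(X_j,\delta_{x_i})$, which is exactly what that formula evaluates. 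Your $\dim\operatorname{Hom}_H(\delta_{x_i},X_j)$ is a second slip that cancels the first.

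\textbf{Part (3), $\Leftarrow$.} The associativity computation you allude to actually gives $[x_ix_i^\star:\mathbf{1}]=((\delta_{x_0}x_i)^*,x_i)=(\delta_{x_i}^*,x_i)=1$ with $x_i^\star=[{}^*X_i]$. You need the corrected part (1) to pass to $x_i^*$.

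\textbf{Part (3), $\Rightarrow$.} You leave this open, and it is not about $\operatorname{ev}_{X_i}\colon X_i^*\otimes X_i\to\mathbf{k}$; that splitting is the $\mathcal{P}_-$ condition. Instead use $\operatorname{Hom}_H(N,Z\otimes X_i)\cong\operatorname{Hom}_H(N\otimes{}^*X_i,Z)$. Under it, $f\colon N\to X_i$ factors through $E\otimes X_i\to X_i$ iff the map $N\otimes{}^*X_i\to\mathbf{k}$, $n\otimes g\mapsto g(f(n))$, is not split epi. By part (1) and the rank-one pairing, this holds for every non-split-epi $f$. Taking $f=\mathrm{id}$ shows the tensored sequence is non-split iff $\mathbf{k}\mid X_i\otimes{}^*X_i$, i.e.\ iff $[x_ix_{i^*}:\mathbf{1}]\neq0$. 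The tensored sequence need not itself be almost split. It is the almost split sequence ending at $X_i$ plus a sequence $0\to U\to U\to 0\to 0$, which still gives $\delta_{x_0}x_i=\delta_{x_i}$ in $R$.
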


\begin{lemma}\label{2.2}
The left and right radicals of the bilinear form $[-,-]_{\mathfrak{X}}$ are $\mathcal{P}_+/I_{\mathfrak{X}}$ and $\mathcal{P}_-/I_{\mathfrak{X}}$, respectively.
\end{lemma}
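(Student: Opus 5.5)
The plan is to compute the bilinear form directly on basis elements and use Lemma~\ref{2.1}(1), which forces the "Gram matrix" of $[-,-]_{\mathfrak{X}}$ in the basis $\{x_i\}$ to have at most one nonzero entry in each row and each column.

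\textbf{Step 1: well-definedness and the quotient.} Since $I_{\mathfrak{X}}$ is a proper ideal, $\mathbf{1}\notin I_{\mathfrak{X}}$. Because $I_{\mathfrak{X}}$ is spanned by basis elements, the coefficient $[z:\mathbf{1}]$ vanishes for every $z\in I_{\mathfrak{X}}$. Hence $[xy:\mathbf{1}]=0$ whenever $x$ or $y$ lies in $I_{\mathfrak{X}}$, so the form descends to $R_{\mathfrak{X}}$. Moreover $I_{\mathfrak{X}}\subseteq I_{\operatorname{max}}\subseteq \mathcal{P}_+\cap\mathcal{P}_-$, so the quotients $\mathcal{P}_\pm/I_{\mathfrak{X}}$ make sense. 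Both $\mathcal{P}_\pm$ and $I_{\mathfrak{X}}$ are spanned by subsets of the basis, so membership in each can be tested coefficientwise.

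\textbf{Step 2: the left radical.} Write $x=\sum_i a_i x_i$ and test against each basis element $x_j$:
\[
[\overline{x},\overline{x_j}]_{\mathfrak{X}}=\sum_i a_i\,[x_ix_j:\mathbf{1}].
\]
By Lemma~\ref{2.1}(1), a term $[x_ix_j:\mathbf{1}]$ is nonzero only if $j=i^*$ and $i=i^{**}$. In that case $i=j^*$ is uniquely determined by $j$, and the term equals $1$. Therefore $[\overline{x},\overline{x_j}]_{\mathfrak{X}}$ equals $a_{j^*}$ if $[x_{j^*}x_j:\mathbf{1}]\neq 0$, and equals $0$ otherwise.

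Now let $i$ range over the indices with $[x_ix_i^*:\mathbf{1}]\neq0$. For such $i$, Lemma~\ref{2.1}(1) gives $i=i^{**}$. Taking $j=i^*$ then gives $j^*=i$, so testing against $x_{i^*}$ recovers the coefficient $a_i$. It follows that $\overline{x}$ lies in the left radical if and only if $a_i=0$ for all $i$ with $[x_ix_i^*:\mathbf{1}]\neq 0$. This is exactly the condition $x\in\mathcal{P}_+$, so the left radical is $\mathcal{P}_+/I_{\mathfrak{X}}$.

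\textbf{Step 3: the right radical.} The argument is symmetric, testing $[\overline{x_j},\overline{x}]_{\mathfrak{X}}=\sum_i a_i[x_jx_i:\mathbf{1}]$. A nonzero term now requires $i=j^*$ and $j=j^{**}$. For an index $i$ with $[x_i^*x_i:\mathbf{1}]\neq0$, we pair with $j=i^*$ to isolate $a_i$. Here Lemma~\ref{2.1}(1) is applied to the pair $(i^*,i)$, which guarantees that $j$ is a legitimate dual index. This identifies the right radical with $\mathcal{P}_-/I_{\mathfrak{X}}$.

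\textbf{Expected obstacle.} The main subtlety is bookkeeping with non-involutive duality: $x_i^{**}$ need not equal $x_i$ in general. One must check that the test element $x_j$ singles out exactly one coefficient, and that it is the intended index. Lemma~\ref{2.1}(1) resolves this, since it forces $i=i^{**}$ precisely in the cases that contribute.
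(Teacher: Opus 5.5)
Your proof is correct, but it is organized differently from the paper's. The paper gets the inclusion of $\mathcal{P}_+/I_{\mathfrak{X}}$ in the left radical from the quoted fact that $\mathcal{P}_+$ is a right ideal, so $xy\in\mathcal{P}_+$ has no $\mathbf{1}$-component. For the reverse inclusion it extracts the coefficient $\lambda_i$ via the chain $[xx_i^\star:\mathbf{1}]=(\delta_{x_0}^*,xx_i^\star)=\cdots=(\delta_{x_i}^*,x)=\lambda_i$. That chain uses associativity and the $**$-symmetry of the form $(-,-)$, together with parts (2) and (3) of Lemma~\ref{2.1}. The paper does not write out the right radical separately.

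You use only Lemma~\ref{2.1}(1). It makes the Gram matrix $([x_ix_j:\mathbf{1}])_{i,j}$ a $0/1$ matrix with at most one nonzero entry in each row and each column, located at positions $(i,i^*)$ with $i=i^{**}$. Hence each value $[\overline{x},\overline{x_j}]_{\mathfrak{X}}$ is either $0$ or a single coefficient of $x$, and both inclusions, for both radicals, are read off coefficientwise. You leave one step implicit: that $[x_{j^*}x_j:\mathbf{1}]\neq 0$ forces $x_j=x_{j^*}^*$, so the surviving coefficient belongs to an index outside $\mathcal{P}_+$. This is again Lemma~\ref{2.1}(1), applied to the pair $(j^*,j)$. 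Your test element $x_{i^*}$ coincides with the paper's $x_i^\star$, since $i=i^{**}$ whenever $[x_ix_i^*:\mathbf{1}]\neq 0$.

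What your route buys is economy and independence. The left radical of the associative form $[-,-]_{\mathfrak{X}}$ is automatically a right ideal, and the right radical a left ideal. So your argument reproves that $\mathcal{P}_+$ is a right ideal and $\mathcal{P}_-$ a left ideal, rather than assuming it. The paper's route instead exercises the dual-basis property $(\delta_{x_i}^*,x_j)=\delta_{i,j}$ of the Auslander elements, which it reuses in the proof of Theorem~\ref{2.3}.
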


\begin{proof}
We characterize the left radical. If $x \in \mathcal{P}_+$, then $xy \in \mathcal{P}_+$ for any $y \in R$ (since $\mathcal{P}_+$ is a right ideal). Thus, $[\overline{x}, \overline{y}]_{\mathfrak{X}} = [xy : \mathbf{1}] = 0$, placing $\overline{x}$ in the left radical.
Conversely, if $x \notin \mathcal{P}_+$, then $x$ has a non-zero coefficient $\lambda_i$ for some $x_i$ with $[x_i x_i^* : \mathbf{1}] \neq 0$. Evaluating the form on $[\overline{x\vphantom{_i^\star}}, \overline{x_i^\star}]_{\mathfrak{X}} $ (where $\star$ is the inverse of $*$), and applying Lemma \ref{2.1}, we have:
\begin{align*}
[\overline{x\vphantom{_i^\star}}, \overline{x_i^\star}]_{\mathfrak{X}} 
&= [xx_i^\star : \mathbbm{1}] = (\delta_{x_0}^*, xx_i^\star) \\[1ex]
&= (\delta_{x_0}^* x, x_i^\star) = (x_i^{\star**}, \delta_{x_0}^* x) \\[1ex]
&= ((\delta_{x_0}x_i)^*, x) = (\delta_{x_i}^*, x) = \lambda_i \neq 0.
\end{align*}

Thus, $\overline{x}$ is not in the left radical.
\end{proof}

A $\mathds{Z}_+$-ring with basis $\mathds{I}$ is \textit{transitive} if for any $X, Z \in \mathds{I}$, there exist $Y_1, Y_2 \in \mathds{I}$ such that $Z$ appears with a nonzero coefficient in both $XY_1$ and $Y_2X$.

\begin{theorem}\label{2.3}
The following statements are equivalent:
\begin{enumerate}
    \item The bilinear form $[-,-]_{\mathfrak{X}}$ is non-degenerate.
    \item $\mathcal{P}_+ = \mathcal{P}_- = I_{\mathfrak{X}} = I_{\operatorname{max}}$.
    \item $R_{\mathfrak{X}}$ is transitive with respect to the basis $\{\overline{x_i}\}_{i \in \mathfrak{J} \setminus \mathfrak{X}}$.
\end{enumerate}
\end{theorem}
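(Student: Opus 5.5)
I will prove $(1)\Leftrightarrow(2)$ from Lemma \ref{2.2}, and then $(2)\Leftrightarrow(3)$ by tracking $[x_ix_j:\mathbf{1}]$ via Lemma \ref{2.1}.

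\emph{(1)$\Leftrightarrow$(2).} By Lemma \ref{2.2}, non-degeneracy means both radicals vanish, i.e.\ $\mathcal{P}_+=I_{\mathfrak{X}}=\mathcal{P}_-$. Since $I_{\mathfrak{X}}\subseteq I_{\operatorname{max}}\subseteq\mathcal{P}_+\cap\mathcal{P}_-$ (as noted in Section \ref{1}), this forces $I_{\mathfrak{X}}=I_{\operatorname{max}}$, which is (2). Conversely, (2) makes both radicals $\mathcal{P}_\pm/I_{\mathfrak{X}}$ zero.

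\emph{(2)$\Rightarrow$(3).} Take basis elements $\overline{x_i},\overline{x_k}$ with $i,k\notin\mathfrak{X}$. Since $x_k\notin\mathcal{P}_-$, we have $[x_k^*x_k:\mathbf{1}]\neq 0$. Since $x_i\notin\mathcal{P}_+$, we have $[x_ix_i^*:\mathbf{1}]\neq0$. I want some $j$ with $[x_ix_j:x_k]\ne0$ and $j\notin\mathfrak{X}$.
\begin{itemize}
\item Consider $[x_i\,(x_i^*x_k):x_k]$. Using associativity of $[-,-]_{\mathfrak{X}}$ and the trace pairing, I expect this to be at least $[x_ix_i^*:\mathbf{1}]\cdot[x_kx_k^\star{}:\mathbf{1}]$-type positive.
\item A cleaner route is to use nonnegativity. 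We have $[x_ix_i^*x_kx_k^{\star}:\mathbf{1}]\ge [x_ix_i^*:\mathbf{1}][x_kx_k^\star:\mathbf{1}]>0$, because $\mathbf{1}$ is a summand of $x_ix_i^*$ and of $x_kx_k^\star$, and $\mathbf{1}\cdot\mathbf{1}$ contributes.
\item Expanding $x_i^*x_k=\sum_j c_jx_j$, some $j$ gives $[x_ix_jx_k^\star:\mathbf{1}]>0$.
\item Expanding $x_ix_j=\sum_l d_lx_l$, some $l$ has $[x_lx_k^\star:\mathbf{1}]>0$, so by Lemma \ref{2.1}(1), $l=k$.
\item If $x_j\in I_{\mathfrak{X}}$, then $x_ix_jx_k^\star\in I_{\mathfrak{X}}\subseteq\mathcal{P}_+$, whose elements have zero $\mathbf{1}$-coefficient (since $\mathbf{1}\notin\mathcal{P}_+$ and each $x_l$ in $\mathcal{P}_+$ has $[x_l:\mathbf 1]=0$). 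This is a contradiction, so $j\notin\mathfrak{X}$.
\end{itemize}
The existence of $Y_2$ with $\overline{x_k}$ appearing in $\overline{x_j}\,\overline{x_i}$ is symmetric, starting from $x_k^\star x_k x_i^* x_i$-type products and using $\mathcal{P}_-$.

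\emph{(3)$\Rightarrow$(2).} Suppose some $x_i\in\mathcal{P}_+\setminus I_{\mathfrak{X}}$. By transitivity there is $j\notin\mathfrak{X}$ with $\overline{x_0}=\overline{\mathbf{1}}$ appearing in $\overline{x_ix_j}$, i.e.\ $[x_ix_j:\mathbf{1}]>0$. Lemma \ref{2.1}(1) then gives $j=i^*$ and $[x_ix_i^*:\mathbf{1}]=1$, contradicting $x_i\in\mathcal{P}_+$. Thus $\mathcal{P}_+=I_{\mathfrak{X}}$, and the left-hand transitivity similarly gives $\mathcal{P}_-=I_{\mathfrak{X}}$. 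Here $\mathbf{1}\notin I_{\mathfrak{X}}$ holds since the ideal is proper. Then $I_{\mathfrak{X}}=I_{\operatorname{max}}$ as before.

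\emph{Main obstacle.} The main obstacle is the positivity step in (2)$\Rightarrow$(3): justifying that $\mathbf{1}$ appears in a product of nonnegative elements when it appears in each factor. This needs $[ab:\mathbf 1]\ge [a:\mathbf 1][b:\mathbf 1]$ for nonnegative $a,b$, which is clear since $\mathbf{1}\cdot\mathbf{1}=\mathbf{1}$ and all other terms are nonnegative. It also needs care with $\star$ versus $*$ when $x_k^{**}\ne x_k$, which Lemma \ref{2.1}(1) controls.
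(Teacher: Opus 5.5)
Your proof is correct and follows essentially the paper's argument: (1)$\Leftrightarrow$(2) via Lemma~\ref{2.2}; (2)$\Rightarrow$(3) via the unit summand of $x_ix_i^*$ together with nonnegativity, where you also justify that the intermediate index lies outside $\mathfrak{X}$, which the paper merely asserts; and the converse via transitivity with target $\overline{\mathbf{1}}$ plus Lemma~\ref{2.1}(1), which you route directly to (2) rather than to (1) through the radicals. The detour through $x_kx_k^\star$ is unnecessary, since $x_k$ already appears in $x_ix_i^*x_k$; if you keep it, note that the fact you actually use is $[x_kx_k^\star:\mathbf{1}]>0$ (not the stated $[x_k^*x_k:\mathbf{1}]\neq 0$), and it holds because $x_k^\star\notin I_{\mathfrak{X}}=\mathcal{P}_-$ by $*$-closure of $I_{\mathfrak{X}}$.
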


\begin{proof}
The equivalence of (1) and (2) follows directly from Lemma \ref{2.2}, as the non-degeneracy of a form is equivalent to its radicals being zero (or in this case, being contained within the quotiented ideal).

$(2) \Rightarrow (3)$: Assume $\mathcal{P}_+ = \mathcal{P}_- = I_{\mathfrak{X}}$. By Lemma \ref{2.1}, for any $\overline{x} \in R_{\mathfrak{X}}$, we have $\overline{x} = \overline{x}^{**}$. Define a linear map $\tau: R_{\mathfrak{X}} \to \mathbb{Z}$ by $\tau(\overline{x}) = (\delta_{x_0}^*, x)$. Then $\tau(\overline{x_i}) = \delta_{i,0}$. Lemma \ref{2.1} implies that $\tau(\overline{x_i}\overline{x_j}) = 1$ if $j=i^*$ and $0$ otherwise. For any $i, j \in \mathfrak{I} \setminus \mathfrak{X}$, since $[x_j x_j^* : \mathbf{1}] = 1$, the trivial module $\mathbf{1}$ is a summand of $x_j x_j^*$. Thus $x_i$ is a summand of $x_j x_j^* x_i$. This implies there exists some $k \in \mathfrak{I} \setminus \mathfrak{X}$ such that $x_k$ is a summand of $x_j^* x_i$ and $[x_j x_k : x_i] \neq 0$. This satisfies the condition for transitivity.

$(3) \Rightarrow (1)$: Suppose $R_{\mathfrak{X}}$ is transitive. For any $i \in \mathfrak{I} \setminus \mathfrak{X}$, there exists $j$ such that $[\overline{x_j}\overline{x_i} : \mathbf{1}] \neq 0$. By Lemma \ref{2.1}, $j$ must be $i^*$. If $\overline{x} = \sum \lambda_i \overline{x_i}$ is in the left radical, then for all $j$, $[\overline{x}, \overline{x_j}]_{\mathfrak{X}} = \lambda_{j^*} [x_{j^*} x_j : \mathbf{1}] = 0$. Since the structure constants are positive, we must have $\lambda_i = 0$ for all $i$, proving non-degeneracy.
\end{proof}
\vskip 3mm
We now recall the definition of representation rings introduced by Benson \cite{Benson-Banach}. Let $R$ be a commutative $\mathbb{Z}_+$-ring with basis $\{x_{i}\}_{i \in \mathfrak{J}}$ and identity $x_{0}=\mathbf{1}$. We call $R$ a representation ring in the sense of Benson if it satisfies the following conditions:
\begin{enumerate}
    \item[(\textit{i})] There is an involution $i \mapsto i^*$ on the index set $\mathfrak{J}$ that induces an anti-involution on $R$.
    \item[(\textit{ii})] If $[x_{i}x_{j}:\mathbf{1}] > 0$, then $j=i^{\ast}$.
    \item[(\textit{iii})] If $[x_{i}x_{i^{\ast}}:\mathbf{1}] = 0$, then $[x_{i}x_{i^{\ast}}x_{i}:x_{i}] \geq 2$.
    \item[(\textit{iv})] There exists a ring homomorphism $\dim: R \rightarrow \mathds{Z}$ such that $\dim(x_{i})=\dim(x_{i^{\ast}}) > 0$ for all $i \in \mathfrak{J}$.
    \item[(\textit{v})] There exists $\rho \in R_{\succ 0}$ such that $x\rho=(\dim x)\rho$ for all $x \in R$.
\end{enumerate}
Next, we show that the Green rings of certain special Hopf algebras fall into the Benson representation rings. In what follows, unless otherwise specified, $P$ denotes an indecomposable projective $H$-module.

\begin{lemma}\label{2.4}
The radical $\operatorname{rad}(P \otimes P^*)$ contains a submodule isomorphic to the trivial module $\mathbf{k}$.
\end{lemma}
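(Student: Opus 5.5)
The plan is to use two facts: $P\otimes P^*$ is projective, and the trivial module occurs as a submodule of $P\otimes P^*$ via the coevaluation map. We then show that this copy of $\mathbf{k}$ cannot split off as a direct summand, because that would force $\mathbf{k}$ to be projective. A nonzero submodule of a module $M$ that meets $\operatorname{soc}(M)$ but is not a summand lies in $\operatorname{rad}(M)$ when $M$ is projective, hence injective, since a finite-dimensional Hopf algebra is Frobenius.

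First, the coevaluation $\operatorname{coev}\colon \mathbf{k}\to P\otimes P^*$, $1\mapsto \sum_i p_i\otimes p^i$ for dual bases, is an injective $H$-module map. This gives a simple submodule $T\cong \mathbf{k}$ of $M:=P\otimes P^*$.

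Second, $M$ is projective, since $I_{\operatorname{proj}}$ is an ideal. Because $H$ is Frobenius, $M$ is also injective. Decompose $M=\bigoplus_k Q_k$ into indecomposable projective-injective modules. Each $Q_k$ has simple socle $\operatorname{soc}(Q_k)$. If $Q_k$ is not simple, then $\operatorname{soc}(Q_k)\subseteq \operatorname{rad}(Q_k)$, because a nonsplit uniserial-bottom inclusion of a simple module into a local module with nonzero radical lands in the radical.

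Third, since $H$ is non-semisimple, $\mathbf{k}$ is not projective; otherwise the integral argument (Maschke for Hopf algebras) makes $H$ semisimple. Hence no summand $Q_k$ is isomorphic to $\mathbf{k}$. More generally, any simple projective summand $Q_k\cong S$ satisfies $S\not\cong \mathbf{k}$.

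Fourth, $T\subseteq\operatorname{soc}(M)=\bigoplus_k \operatorname{soc}(Q_k)$. Project $T$ to the socle components. The components lying in simple projective summands $Q_k\cong S\not\cong\mathbf{k}$ vanish by Schur's lemma. So $T\subseteq \bigoplus_{Q_k \text{ non-simple}}\operatorname{soc}(Q_k)\subseteq \bigoplus_k\operatorname{rad}(Q_k)=\operatorname{rad}(M)$.

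The main obstacle is justifying $\operatorname{soc}(Q)\subseteq\operatorname{rad}(Q)$ for a non-simple indecomposable projective $Q$. The argument runs as follows. $Q$ is local with unique maximal submodule $\operatorname{rad}Q$, and $\operatorname{soc}Q$ is simple. If $\operatorname{soc}Q\not\subseteq\operatorname{rad}Q$, then $\operatorname{soc}Q+\operatorname{rad}Q=Q$. Since $\operatorname{rad}Q$ is superfluous, this forces $\operatorname{soc}Q=Q$, so $Q$ is simple, a contradiction. A secondary point to check is that the Frobenius property, giving projective $=$ injective, holds for every finite-dimensional Hopf algebra; this is the Larson–Sweedler theorem.
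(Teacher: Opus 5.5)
Your proof is correct and follows essentially the paper's route: the coevaluation invariant gives a copy of $\mathbf{k}$ in the socle of the projective-injective module $P\otimes P^*$, non-projectivity of $\mathbf{k}$ forces the relevant indecomposable summands to be non-simple, and the socle of such a summand lies in its radical. The only difference is that you use Schur's lemma to show the canonical copy $\operatorname{coev}(\mathbf{k})$ itself lies in $\operatorname{rad}(P\otimes P^*)$, a slightly stronger conclusion. The paper instead picks one summand $P_s$ with $\operatorname{soc}(P_s)\cong\mathbf{k}$ and uses $0\neq\operatorname{soc}(\operatorname{rad}P_s)\subseteq\operatorname{soc}(P_s)$.
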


\begin{proof}
Let $\{p_i\}$ be a basis for $P$ and $\{p_i^*\}$ the dual basis. Consider the element $x = \sum p_i \otimes p_i^* \in (P \otimes P^*)^H$. The submodule generated by $x$ satisfies $\mathbf{k} x \cong \mathbf{k}$. Since $P$ is injective (as $H$ is self-injective), each indecomposable component $P_i$ of $P \otimes P^*$ has a simple socle. Therefore, there exists a summand $P_s$ such that $\operatorname{soc}(P_s) \cong \mathbf{k}$. Because the trivial module $\mathbf{k}$ is not projective, $P_s \neq \operatorname{soc}(P_s)$. Noting that $\operatorname{soc}(\operatorname{rad}(P_s)) \subseteq \operatorname{soc}(P_s)$ and $\operatorname{rad}(P_s) \neq 0$, it follows that $0 \neq \operatorname{soc}(\operatorname{rad}(P_s)) \subseteq \operatorname{rad}(P_s) \subseteq \operatorname{rad}(P \otimes P^*)$.
\end{proof}

\begin{remark}\label{2.5}
 The assumption that $P$ is projective can be replaced by a weaker condition: $P \in \mathcal{P}_+$. As previously shown, $P \otimes P^*$ has an indecomposable summand $P_s$ containing a submodule $U \cong \mathbf{k}$. Suppose for contradiction that $U \not\subseteq \operatorname{rad}(P_s)$. Then there exists a maximal submodule $M$ of $P_s$ not containing $U$. Since $U$ is simple, $U \cap M = 0$ and $U + M = P_s$, yielding $P_s = U \oplus M$. The indecomposability of $P_s$ then forces $P_s = U \cong \mathbf{k}$, which explicitly contradicts $P \in \mathcal{P}_+$. Therefore, $U \subseteq \operatorname{rad}(P_s)$, and the conclusion follows.
\end{remark}
\begin{lemma}\label{2.6}
If the square of the antipode $S^2$ is an inner automorphism of $H$, then $\mathbf{k}$ is isomorphic to a composition factor of $(P \otimes P^*)/\operatorname{rad}(P \otimes P^*)$.
\end{lemma}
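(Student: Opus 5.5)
The plan is to show that $\operatorname{Hom}_H(P\otimes P^*,\mathbf{k})\neq 0$. A nonzero morphism $\varphi\colon P\otimes P^*\to\mathbf{k}$ is surjective, since $\mathbf{k}$ is simple. Its kernel is therefore a maximal submodule, so it contains $\operatorname{rad}(P\otimes P^*)$. Hence $\varphi$ factors through $(P\otimes P^*)/\operatorname{rad}(P\otimes P^*)$, which is semisimple, and $\mathbf{k}$ occurs there as a direct summand, in particular as a composition factor. So everything reduces to producing such a $\varphi$.

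To get $\varphi$, I will use the rigidity of $\operatorname{mod}(H)$. The standard adjunction gives
\[
\operatorname{Hom}_H(P\otimes P^*,\mathbf{k})\cong\operatorname{Hom}_H(P,\mathbf{k}\otimes P^{**})\cong\operatorname{Hom}_H(P,P^{**}),
\]
where $P^*$ is the left dual and $P^{**}$ its dual. Concretely, $\psi\in\operatorname{Hom}_H(P,P^{**})$ corresponds to $p\otimes f\mapsto \psi(p)(f)$. It therefore suffices to show $P\cong P^{**}$, or merely that $\operatorname{Hom}_H(P,P^{**})\neq0$. I will identify $P^{**}$ with $P$ via the canonical vector space isomorphism; then $P^{**}$ is $P$ with $H$ acting through $S^2$, i.e. $h\cdot p=S^2(h)p$. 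If $S^2(h)=ghg^{-1}$ for some invertible $g\in H$, then $\theta\colon P\to P^{**}$, $p\mapsto gp$, satisfies
\[
\theta(hp)=ghp=S^2(h)gp=h\cdot\theta(p),
\]
so $\theta$ is an $H$-isomorphism. Thus $\varphi(p\otimes f):=f(gp)$ (or the analogous formula with $g^{-1}$, depending on the convention) is a nonzero $H$-linear map $P\otimes P^*\to\mathbf{k}$. Nonzeroness is immediate: take $f$ nonzero on $gp$.

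The main obstacle is bookkeeping. The convention for the dual action ($S$ versus $S^{-1}$) determines whether the twisting is by $S^2$ or $S^{-2}$, and whether one needs $g$ or $g^{-1}$. Both are harmless because $S^{-2}$ is inner exactly when $S^2$ is. I would verify $H$-linearity of $\varphi$ directly with the paper's conventions rather than relying only on the abstract adjunction, checking
\[
\varphi\bigl(h_{(1)}p\otimes h_{(2)}f\bigr)=\varepsilon(h)\varphi(p\otimes f)
\]
using $(h f)(q)=f(S(h)q)$ and $S^2(h)g=gh$.

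Note that projectivity of $P$ plays no role in this argument. The lemma actually holds for every module $V$, which is consistent with Remark \ref{2.5}.
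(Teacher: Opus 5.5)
Your proof is correct and follows the paper's route: the paper also uses the isomorphism $\theta\colon P\to P^{**}$, $\theta(x)(f)=f(ux)$, coming from $S^2(h)=uhu^{-1}$, and composes it with evaluation to get the surjection $\sigma\colon P\otimes P^*\to\mathbf{k}$, $p\otimes f\mapsto f(up)$; your direct $H$-linearity check using $S^2(h)g=gh$ is exactly what makes this map $H$-linear. The only difference is the last step: the paper uses that $P\otimes P^*$ is a sum of indecomposable projectives to find a summand $P_s$ with $P_s/\operatorname{rad}(P_s)\cong\mathbf{k}$, whereas your argument (the kernel of $\varphi$ is a maximal submodule and so contains $\operatorname{rad}(P\otimes P^*)$) needs no projectivity, so the statement indeed holds for every nonzero module. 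Note that Remark \ref{2.5} makes its projectivity-weakening point for Lemma \ref{2.4}, not for this lemma.
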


\begin{proof}
If $S^2(h) = uhu^{-1}$, the map $\theta: P \to P^{**}$ defined by $\theta(x)(f) = f(ux)$ is an isomorphism. This yields a surjection $\sigma: P \otimes P^* \to P^{**} \otimes P^* \to \mathbf{k}$. Since  $P \otimes P^*$ is a direct sum $\oplus_iP_i$ of indecomposable projective modules and $(P \otimes P^*)/\ker \sigma \cong \mathbf{k}$, there exists an indecomposable projective summand $P_s$ such that $P_s/\operatorname{rad}(P_s) \cong \mathbf{k}$.
\end{proof}

\begin{remark}\label{2.7}
To prove that the map $\sigma$ is an $H$-module surjection, we assume that $S^2$ is an inner automorphism. In fact, this can be replaced by the condition: the Green ring $R$ of $H$ is commutative. This follows from the fact that for any $H$-module $M$, the evaluation map $ev_M: M^* \otimes M \to \mathbf{k}$ given by $f \otimes x \mapsto f(x)$ is an $H$-module homomorphism.
\end{remark}

A celebrated theorem of Auslander and Carlson \cite[Proposition 4.9]{Auslander1-Carlson} asserts that for any indecomposable $KG$-module $M$ over a field $K$ of characteristic $p$ such that $p \mid \dim M$, the tensor product $M \otimes M^* \otimes M$ contains $M \oplus M$ as a direct summand. In what follows, we establish a Hopf-algebraic analogue of this fundamental result.

\begin{theorem}\label{2.8}
Let $H$ be a finite-dimensional non-semisimple Hopf algebra over an algebraically closed field $\mathbf{k}$ such that the square of its antipode, $S^2$, is an inner automorphism. Let $P$ denote the indecomposable projective $H$-module. Then $[P \otimes P^* \otimes P : P] \geq 2$.
\end{theorem}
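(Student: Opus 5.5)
The idea is to show that the trivial module $\mathbf{k}$ occurs at least twice as a composition factor of the projective module $P \otimes P^*$, counted with multiplicity, in the sense that projective covers of $\mathbf{k}$ appear at least twice among its indecomposable summands. Tensoring with $P$ then yields two copies of $P$ in $P\otimes P^*\otimes P$. Throughout, let $P_{\mathbf{k}}$ denote the projective cover of $\mathbf{k}$. Since $H$ is a Frobenius algebra, $P_{\mathbf{k}}$ is also the injective hull of some simple module.

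The first step is to write the projective module $P\otimes P^*$ as $\bigoplus_i P_i$ with each $P_i$ indecomposable projective. Lemma \ref{2.6} gives a summand $P_s$ with top $P_s/\operatorname{rad}P_s\cong\mathbf{k}$, so $P_s\cong P_{\mathbf{k}}$. Lemma \ref{2.4} gives a summand $P_t$ with socle $\mathbf{k}$, arising from the invariant element $\sum p_i\otimes p_i^*$.

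The crux is to show that $P_t\not\cong P_s$, or more robustly that $P_{\mathbf{k}}$ appears with multiplicity at least $2$. To do this I would look at the composite $\mathbf{k}\to P\otimes P^*\to \mathbf{k}$, where the first map is coevaluation and the second is the $\sigma$ of Lemma \ref{2.6}, built from the inner element $u$. This composite is multiplication by a trace $\operatorname{tr}_P(u^{\pm1})$, a quantum dimension.

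If this trace is nonzero, then $\mathbf{k}$ splits off as a direct summand of $P\otimes P^*$. That summand would be projective, which contradicts the non-semisimplicity of $H$. So the trace is $0$, and the image of coevaluation lies in $\ker\sigma$. Suppose the copy of $P_{\mathbf{k}}$ carrying the socle $\mathbf{k}$ were the same summand as the one mapping onto $\mathbf{k}$. Then the socle of that summand is a copy of $\mathbf{k}$ lying inside its radical, and its top is also $\mathbf{k}$. In that case I would instead argue via multiplicities as follows.

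We have $[P\otimes P^*:P_{\mathbf{k}}]=\dim\operatorname{Hom}_H(P\otimes P^*,\mathbf{k})=\dim\operatorname{Hom}_H(P,P^{**})=\dim\operatorname{End}_H(P)$, using $P\cong P^{**}$ via $u$. Since $P$ is indecomposable projective, $\dim \operatorname{End}_H(P)$ equals the Cartan multiplicity of its top $S$ in $P$. This number is at least $2$ when $P$ is not simple, since then the socle of $P$ is isomorphic to its top, up to the Nakayama twist. This twist is exactly what must be controlled. If $P$ is simple projective, then $P\otimes P^*$ still contains $\mathbf{k}$ only inside projective summands, and a separate dimension count is needed.

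The final step is to tensor on the right with $P$. Since $\mathbf{k}$ is a quotient of $P_{\mathbf{k}}$, the module $P_{\mathbf{k}}\otimes P$ is projective and surjects onto $\mathbf{k}\otimes P=P$. Hence $P$ is a summand of $P_{\mathbf{k}}\otimes P$. With two copies of $P_{\mathbf{k}}$ in $P\otimes P^*$, this gives $[P\otimes P^*\otimes P:P]\ge 2$.

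I expect the main obstacle to be the multiplicity count $\dim\operatorname{End}_H(P)\ge2$. The issue is that socle and top of $P$ agree only up to the Nakayama automorphism, so they may differ. I would try to use the hypothesis that $S^2$ is inner, which constrains the Nakayama automorphism via Radford's formula, to force $\operatorname{soc}P\cong\operatorname{top}P$ up to tensoring with a one-dimensional module. Failing that, I would fall back on the trace argument above to separate $P_s$ from $P_t$ directly.
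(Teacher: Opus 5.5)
Your plan depends on showing that the projective cover $P_{\mathbf{k}}$ occurs at least twice among the indecomposable summands of $P\otimes P^*$. That claim is false in general, so the argument cannot be completed as designed.

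Your own (correct) formula $[P\otimes P^*:P_{\mathbf{k}}]=\dim\operatorname{Hom}_H(P,P^{**})=\dim\operatorname{End}_H(P)$ already shows this, in two ways:
\begin{enumerate}
\item If $P$ is simple projective, e.g.\ $V(2,r)$ over $D(H_4)$, then $\dim\operatorname{End}_H(P)=1$.
\item For the Taft algebra $H_n(q)$, where $S^2$ is conjugation by $g$, each $M(n,r)$ is uniserial with $n$ pairwise non-isomorphic composition factors, so again $\dim\operatorname{End}_H(P)=1$. Indeed \eqref{eq:4.2} gives $P\otimes P^*\cong\bigoplus_{r\in\mathbb{Z}_n}M(n,r)$, with each indecomposable projective appearing exactly once.
\end{enumerate}
Radford's formula cannot rescue the step. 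In $H_n(q)$ the socle of $P$ is a one-dimensional twist of its top, yet the top occurs only once in $P$.

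Relatedly, the summand $P_t$ with socle $\mathbf{k}$ is the injective hull of $\mathbf{k}$, not a ``copy of $P_{\mathbf{k}}$''. In $H_n(q)$ the projective cover of $\mathbf{k}$ has socle $\not\cong\mathbf{k}$. Your last step, however, only extracts $P$ from copies of $P_{\mathbf{k}}$.

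The missing idea is to count $\mathbf{k}$ as a composition factor of $M=P\otimes P^*$, not as the top of a summand, and to use that $P$ is both projective and injective. Your trace argument already supplies what is needed: it shows $\operatorname{im}(\operatorname{coev})\cong\mathbf{k}$ lies in $K=\ker\sigma$, while $M/K\cong\mathbf{k}$. Then:
\begin{enumerate}
\item The sequence $0\to K\otimes P\to M\otimes P\to P\to 0$ splits because $P$ is projective.
\item The embedding $P\cong\mathbf{k}\otimes P\hookrightarrow K\otimes P$ splits because $P$ is injective.
\end{enumerate}
Hence $P\oplus P$ is a direct summand of $P\otimes P^*\otimes P$.

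This is essentially the paper's proof. There, Lemmas \ref{2.4} and \ref{2.6} give one copy of $\mathbf{k}$ in $\operatorname{rad}M$ and one in $M/\operatorname{rad}M$, so $\mathbf{k}$ is a composition factor of $M$ at least twice. Tensoring a composition series of $M$ with the projective $P$ gives split sequences, so $M\otimes P\cong\bigoplus_i (M_i/M_{i+1})\otimes P$ contains $\mathbf{k}\otimes P\cong P$ at least twice.
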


\begin{proof}
Let $M = P \otimes P^*$ and $N = \operatorname{rad}(M)$. By Lemmas \ref{2.4} and \ref{2.6}, $N$ and $M/N$ both have $\mathbf{k}$ as a composition factor. Thus $M$ has at least two composition factors isomorphic to $\mathbf{k}$. Since $P$ is projective, tensoring a composition series of $M$ with $P$ splits: $M\otimes P \cong \bigoplus (M_i/M_{i+1} \otimes P)$, where the $M_i$ are the submodules in the series. Since at least two $M_i/M_{i+1} \cong \mathbf{k}$, we have $[M \otimes P : P] \geq 2$.
\end{proof}

\begin{theorem}\label{2.9}
Let $H$ be a finite-dimensional, non-semisimple Hopf algebra with a commutative Green ring $R$. If $S^2$ is an inner automorphism and $I_{\operatorname{max}}=I_{\operatorname{proj}}$, then $R$ is a representation ring in the sense of Benson.
\end{theorem}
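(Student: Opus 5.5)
The plan is to check Benson's axioms (\textit{i})--(\textit{v}) one by one for $R=r(H)$ with basis $\operatorname{ind}(H)$. Commutativity is a hypothesis. The $\mathbb{Z}_+$-structure is clear, since tensor products of modules decompose with non-negative multiplicities.

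For (\textit{i}), I would take the involution to be $i\mapsto i^*$ induced by duality. Because $S^2$ is inner, the map $\theta$ in the proof of Lemma \ref{2.6} gives $V\cong V^{**}$ for every module $V$ (the same formula works for any $V$, not only $P$). Hence $x_{i^{**}}=x_i$, and $*$ is a genuine involution on $\mathfrak{J}$. Since $(U\otimes V)^*\cong V^*\otimes U^*$, $*$ is an anti-automorphism of $R$; as $R$ is commutative it is in fact an automorphism. Axiom (\textit{ii}) is exactly Lemma \ref{2.1}(1). For (\textit{iv}), I would take $\dim$ to be $\mathbf{k}$-dimension. It is multiplicative and additive, and $\dim V^*=\dim V>0$. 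For (\textit{v}), I would take $\rho=x_{\operatorname{reg}}=[H]$. This is a positive element, and the paper already notes that $x\,x_{\operatorname{reg}}=\dim(x)\,x_{\operatorname{reg}}$.

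The substantive axiom is (\textit{iii}). Suppose $[x_ix_{i^*}:\mathbf{1}]=0$. Then $x_i\in\mathcal{P}_+$. I would first show that this forces $x_i\in I_{\operatorname{max}}$. Since $[x_jx_{j^*}:\mathbf{1}]=[x_{j^*}x_j:\mathbf{1}]$ by commutativity, we get $\mathcal{P}_+=\mathcal{P}_-$. This is a two-sided ideal, because $R$ is commutative. It is also closed under $*$: we have $[x_i^*x_i^{**}:\mathbf{1}]=[x_i^*x_i:\mathbf{1}]$ using $x_i^{**}=x_i$, and this equals $[x_ix_i^*:\mathbf{1}]$ again by commutativity. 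Finally it is proper, since $\mathbf{1}\notin\mathcal{P}_+$. So $\mathcal{P}_+$ is a representation ideal, and hence $\mathcal{P}_+\subseteq I_{\operatorname{max}}$. Combined with the reverse inclusion $I_{\operatorname{max}}\subseteq\mathcal{P}_+\cap\mathcal{P}_-$ stated in the preliminaries, this gives $\mathcal{P}_+=I_{\operatorname{max}}$. The hypothesis $I_{\operatorname{max}}=I_{\operatorname{proj}}$ then shows that $x_i=[P]$ for some indecomposable projective $P$.

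Theorem \ref{2.8} now gives $[P\otimes P^*\otimes P:P]\ge 2$, which is exactly $[x_ix_{i^*}x_i:x_i]\ge 2$. This completes (\textit{iii}).

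The main point requiring care is the identification $\mathcal{P}_+=I_{\operatorname{max}}$, in particular verifying that $\mathcal{P}_+$ is $*$-closed and two-sided. Both rely on commutativity together with $V\cong V^{**}$. Without these facts, one could not pass from the vanishing of the trivial coefficient to projectivity of $x_i$.
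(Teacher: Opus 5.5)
Your proposal is correct and follows the paper's proof axiom by axiom. Axiom (\textit{i}) comes from duality, with $V\cong V^{**}$ because $S^2$ is inner, and Axiom (\textit{ii}) from Lemma \ref{2.1}. For Axiom (\textit{iii}), commutativity gives $\mathcal{P}_+=\mathcal{P}_-=I_{\operatorname{max}}$, and then $I_{\operatorname{max}}=I_{\operatorname{proj}}$ together with Theorem \ref{2.8} finishes it. Axioms (\textit{iv}) and (\textit{v}) use module dimension and the regular module, exactly as in the paper. The paper only asserts $\mathcal{P}_+=I_{\operatorname{max}}$; your check that $\mathcal{P}_+$ is a two-sided, $*$-closed, proper, basis-spanned ideal, and hence equals $I_{\operatorname{max}}$, supplies the missing justification.
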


\begin{proof}
Since $S^2$ is an inner automorphism, the map $*$ induces an anti-involution on $R$, satisfying Axiom (i) of the definition of a Benson representation ring. By Lemma \ref{2.1}, Axiom (ii) holds. Since $R$ is commutative, we have $\mathcal{P}_+ = \mathcal{P}_- = I_{\operatorname{max}}$. This equality, alongside Theorem \ref{2.8} (derived from Lemmas \ref{2.4} and \ref{2.6}) and the assumption $I_{\operatorname{max}}=I_{\operatorname{proj}}$, establishes Axiom (iii), namely that $[x_i x_{i^*} x_i : x_i] \geq 2$. The dimension is understood as the standard module dimension, hence Axiom (iv) holds. Finally, the regular module $x_{\operatorname{reg}}$ satisfies $x x_{\operatorname{reg}} = \dim_{\mathbf{k}}(x) x_{\operatorname{reg}}$, which verifies Axiom (v).
\end{proof}

\section{Representation Ideals and Gamma Invariants: The Case of $D(H_4)$}\label{3}

In this section, we study the Drinfeld double $D(H_4)$ of the 4-dimensional Sweedler algebra $H_4$ over an algebraically closed field $\mathbf{k}$ with $\operatorname{char}(\mathbf{k}) \neq 2$. We show that $D(H_4)$ admits infinitely many representation ideals, providing a unified notation. Furthermore, we demonstrate that when the maximal representation ideal is chosen, the corresponding gamma invariant induces a ring homomorphism.

As $D(H_4)$ is a quasitriangular Hopf algebra, its Green ring $r(D_4)$ is commutative. Based on the classification in \cite{Chen1}, the indecomposable $D_4$-modules are categorized as follows:
\begin{itemize}
    \item \textbf{Simple modules:} $V(r)$ and $V(2,r)$ for $r \in \mathbb{Z}_2$. Here, $V(0)$ denotes the trivial module $\mathbf{k}$, and the modules $V(2,r)$ are both projective and injective.
    \item \textbf{Projective modules:} $P(r)$, the projective covers of $V(r)$, for $r \in \mathbb{Z}_2$.
    \item \textbf{String modules:} $\Omega^s V(r)$ and $\Omega^{-s} V(r)$ for $s \geq 1$.
    \item \textbf{Band modules:} $M_s(r, \eta)$ for $s \geq 1, r \in \mathbb{Z}_2$, and $\eta \in \overline{\mathbf{k}} = \mathbf{k} \cup \{\infty\}$.
\end{itemize}

Recall from \cite{Chen2} that the dual modules are given by $V(r)^* \cong V(r)$, $V(2, r)^* \cong V(2, r+1)$, $P(r)^* \cong P(r)$, $(\Omega^{m} V(r))^* \cong \Omega^{-m} V(r)$ and $(M_s(r, \eta))^* \cong M_s(r+1, \eta)$. Furthermore, we have the following tensor product rules:
\begin{align}
    V(r) \otimes V(r') &\cong V(r+r'), \tag{3.1} \label{eq:1} \\
    V(r') \otimes M_s(r, \eta) &\cong M_s(r+r', \eta), \tag{3.2} \label{eq:2} \\
    M_s(r, \alpha) \otimes M_t(r', \eta) &\cong stP(r+r'), \tag{3.3} \label{eq:3} \\
    P(r') \otimes M_s(r, \eta) &\cong sP(0) \oplus sP(1), \tag{3.4} \label{eq:4} \\
    V(2, r') \otimes M_s(r, \eta) &\cong sV(2, 0) \oplus sV(2, 1), \tag{3.5} \label{eq:5} \\
    \Omega^m V(r) \otimes \Omega^n V(r') &\cong \Omega^{m+n}V(r+r') \oplus (\operatorname{proj}), \tag{3.6} \label{eq:6} \\
    M_s(r, \eta) \otimes M_t(r', \eta) &\cong s(t-1)P(r+r') \oplus M_s(0, \eta) \oplus M_s(1, \eta), \tag{3.7} \label{eq:7} \\
    M_s(r, \eta) \otimes \Omega^m V(r') &\cong 
    \begin{cases} 
        M_s(r+r', \eta) \oplus (\operatorname{proj}), & \text{if } m \text{ is even}, \\ 
        M_s(r+r'+1, \eta) \oplus (\operatorname{proj}), & \text{if } m \text{ is odd}, \tag{3.8} \label{eq:8}
    \end{cases}
\end{align}
where $m \in \mathbb{Z} \setminus \{0\}$, $1 \le s \le t$, and $\alpha, \eta \in \overline{\mathbf{k}}$ such that $\alpha \neq \eta$. Here, we define $\Omega^0 V(r) = V(r)$ and let $(\operatorname{proj})$ denote the projective parts of corresponding modules.

\begin{lemma}\label{lem:ideals}
For the Green ring $r(D_4)$, the projective and maximal representation ideals are given by:
\begin{align*}
    I_{\operatorname{proj}} &= \mathbb{Z}\{V(2,r), P(r) \mid r \in \mathbb{Z}_2\}, \\
    I_{\operatorname{max}} &= \mathbb{Z}\{V(2,r), P(r), M_s(r, \eta) \mid r \in \mathbb{Z}_2, s \geq 1, \eta \in \overline{\mathbf{k}}\}.
\end{align*}
\end{lemma}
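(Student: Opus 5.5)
For $I_{\operatorname{proj}}$ there is essentially nothing to prove. By definition, $I_{\operatorname{proj}}$ is spanned by the classes of the indecomposable projective $D_4$-modules. In Chen's classification these are exactly $P(r)$ and $V(2,r)$, $r\in\mathbb{Z}_2$; the $V(2,r)$ are simple and projective-injective, and no string or band module is projective. So the first equality is just a reading of the list of indecomposables.

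For $I_{\operatorname{max}}$ I will proceed in two steps.

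\textbf{Step 1: $J:=\mathbb{Z}\{V(2,r),P(r),M_s(r,\eta)\}$ is a representation ideal.} It is spanned by part of the basis $\operatorname{ind}(D_4)$ and omits $V(0)$, so it is proper. It is closed under $*$, since $P(r)^*\cong P(r)$, $V(2,r)^*\cong V(2,r+1)$ and $M_s(r,\eta)^*\cong M_s(r+1,\eta)$. Because $r(D_4)$ is commutative, it suffices to check that multiplying each spanning element by each basis element stays in $J$:
\begin{itemize}
\item Multiplying by a projective stays in $I_{\operatorname{proj}}\subseteq J$, since $I_{\operatorname{proj}}$ is an ideal.
\item For band modules, products with $V(r')$ are handled by \eqref{eq:2}, with other band modules by \eqref{eq:3} and \eqref{eq:7}, with projectives by \eqref{eq:4} and \eqref{eq:5}, and with string modules by \eqref{eq:8}.
\end{itemize}
Every result is a sum of band and projective modules, so $J$ is an ideal.

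\textbf{Step 2: maximality.} Let $I_{\mathfrak{X}}$ be any representation ideal. I will show it contains no $V(r)$ and no $\Omega^{m}V(r)$ with $m\neq 0$; then $I_{\mathfrak{X}}\subseteq J$, and $J=I_{\operatorname{max}}$.
\begin{itemize}
\item If $V(r)\in I_{\mathfrak{X}}$, then $V(r)\otimes V(r)\cong V(0)=\mathbf{1}$ lies in $I_{\mathfrak{X}}$ by \eqref{eq:1}. Then $I_{\mathfrak{X}}=R$, contradicting properness.
\item If $\Omega^mV(r)\in I_{\mathfrak{X}}$, closure under duality gives $\Omega^{-m}V(r)\in I_{\mathfrak{X}}$. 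By \eqref{eq:6}, $\Omega^mV(r)\otimes\Omega^{-m}V(r)\cong V(0)\oplus(\operatorname{proj})$.
\end{itemize}
Since $I_{\mathfrak{X}}$ is spanned by basis elements, every summand of an element of $I_{\mathfrak{X}}$ is again in $I_{\mathfrak{X}}$. Hence $\mathbf{1}\in I_{\mathfrak{X}}$, again a contradiction.

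The main point needing care is this last step. Specifically, \eqref{eq:6} must be valid for $m+n=0$ with $\Omega^0V(r)=V(r)$, and the trivial summand $V(0)$ must genuinely appear rather than being absorbed into the projective part; I will rely on the paper's convention $\Omega^0V(r)=V(r)$ for this. As a cross-check, $[x_ix_i^*:\mathbf{1}]\neq 0$ for these string modules, so they lie outside $\mathcal{P}_+\supseteq I_{\operatorname{max}}$, which is consistent with the conclusion.
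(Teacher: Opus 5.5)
Your proof is correct. It differs from the paper's in how you obtain $J\subseteq I_{\operatorname{max}}$.

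The paper argues through $\mathcal{P}_\pm$. Because $r(D_4)$ is commutative, $\mathcal{P}_+=\mathcal{P}_-$, and by the remark in Section~1 this common ideal is then $I_{\operatorname{max}}$. So it only has to decide, for each indecomposable $x_i$, whether $\mathbf{1}$ is a summand of $x_ix_i^*$. That needs only \eqref{eq:6} at $m+n=0$ for string modules, \eqref{eq:7} with $t=s$ for band modules, and the fact that $x_ix_i^*$ is projective when $x_i$ is.

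You instead check directly, from \eqref{eq:2}--\eqref{eq:5}, \eqref{eq:7}, \eqref{eq:8} and the duality formulas, that $J$ is closed under multiplication and $*$. This is the same verification the paper carries out in the next lemma for the ideals $I_\Theta$. Your Step~2 is a concrete instance of $I_{\operatorname{max}}\subseteq\mathcal{P}_+$.

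The trade-off is this. The paper's route uses far less of the multiplication table, but it relies on the general facts that $\mathcal{P}_+$ is a right ideal and that, when $\mathcal{P}_+=\mathcal{P}_-$, this ideal is itself a representation ideal. Yours is self-contained. It also shows explicitly that $J$ contains every representation ideal, so the maximal one is unique.

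Two minor points:
\begin{enumerate}
\item In Step~2 you do not need closure under duality: $\Omega^mV(r)\cdot\Omega^{-m}V(r)\in I_{\mathfrak{X}}$ already because $I_{\mathfrak{X}}$ is an ideal.
\item Your caveat about reading \eqref{eq:6} at $m+n=0$ (with $\Omega^0V(r)=V(r)$ and $V(0)$ a genuine summand) applies equally to the paper, whose argument for string modules rests on exactly that instance.
\end{enumerate}
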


\begin{proof}
The structure of $I_{\operatorname{proj}}$ follows from the fact that $D(H_4)$ is a Frobenius algebra, in which projective and injective modules coincide. For $I_{\operatorname{max}}$, notice that $(M_s(r, \eta))^* \cong M_s(r+1, \eta)$, and the equations \eqref{eq:6} and \eqref{eq:7} imply that for any band module $M = M_s(r, \eta)$, the trivial module $\mathbf{1}=V(0)$ is not a direct summand of $M \otimes M^*$. In contrast, for string modules $\Omega^s V(r)$ with $s \in \mathbb{Z} \setminus \{0\}$, the trivial module appears in the decomposition $\Omega^s V(r) \otimes (\Omega^s V(r))^*$. Thus, $M_s(r, \eta) \in \mathcal{P}_+ = \mathcal{P}_-$, whereas $\Omega^s V(r) \notin \mathcal{P}_+ = \mathcal{P}_-$. This implies that band modules belong to the maximal ideal, while string modules do not. This establishes the assertion for $I_{\operatorname{max}}$.
\end{proof}

\begin{lemma}\label{lem:all representation ideals}
In the Green ring $r(D_4)$, every representation ideal is of the form 
$$I_{\Theta}= \mathbb{Z}\{V(2, r), P(r), M_s(r, \eta) \mid r \in \mathbb{Z}_2, s \geq 1, \eta \in \Theta\},$$
for some subset $\Theta \subseteq \overline{\mathbf{k}}$.
\end{lemma}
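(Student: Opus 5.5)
Let $I$ be any representation ideal of $r(D_4)$. The plan is to sandwich $I$ between the two extreme ideals computed in Lemma~\ref{lem:ideals}, show that $I$ cannot separate the two parities $r$ or the different lengths $s$ of band modules with the same parameter $\eta$, and then read off $\Theta$. Since $I_{\operatorname{proj}}$ is the unique minimal representation ideal, we have $I_{\operatorname{proj}}\subseteq I$. Since $I_{\operatorname{max}}$ is the maximal one, $I\subseteq I_{\operatorname{max}}$. Hence $I$ is spanned by all $V(2,r)$, $P(r)$, together with some set $B$ of band modules $M_s(r,\eta)$. It remains to show that $B$ is of the form $\{M_s(r,\eta)\mid r\in\mathbb{Z}_2,\ s\ge1,\ \eta\in\Theta\}$, i.e.\ that membership of $M_s(r,\eta)$ in $I$ depends only on $\eta$.

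\textbf{Step 1: parity.} By \eqref{eq:2}, $V(1)\otimes M_s(r,\eta)\cong M_s(r+1,\eta)$. So if $M_s(r,\eta)\in I$, then $M_s(r+1,\eta)\in I$, because $I$ is an ideal and is spanned by basis elements. (Closure under $*$ gives the same conclusion via $(M_s(r,\eta))^*\cong M_s(r+1,\eta)$.)

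\textbf{Step 2: length.} This is the main obstacle. Suppose $M_s(r,\eta)\in I$; we want $M_t(r',\eta)\in I$ for every $t\ge1$. The case $t\le s$ follows from \eqref{eq:7}. Indeed, $M_t(r',\eta)\otimes M_s(r,\eta)$ with $t\le s$ lies in $I$ and decomposes as $t(s-1)P\oplus M_t(0,\eta)\oplus M_t(1,\eta)$. Since $I$ is spanned by basis elements, each summand $M_t(\cdot,\eta)$ lies in $I$. For $t>s$, the same formula only returns $M_s$, so going upward needs another product. I would try in turn:
\begin{itemize}
\item products with string modules, using \eqref{eq:8}, although these preserve $s$;
\item tensoring $M_s(r,\eta)$ with a suitable module whose decomposition contains $M_t(\cdot,\eta)$ for $t>s$, drawing on Chen's full tensor product formulas in \cite{Chen2} (for instance products of bands with non-projective modules beyond those listed).
\end{itemize}
If upward closure genuinely fails, the correct statement would have $\Theta$ replaced by pairs $(\eta,s_\eta)$ recording a downward-closed range of lengths. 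I would check carefully whether $r(D_4)$ admits such "truncated" ideals before asserting the lemma exactly as written. The natural candidate to test is the span of $I_{\operatorname{proj}}$ and $\{M_{t}(r,\eta)\mid t\le s\}$, whose closure under multiplication by all indecomposables is checkable from \eqref{eq:1}--\eqref{eq:8}.

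\textbf{Step 3: converse.} Every $I_\Theta$ is a representation ideal. It is spanned by basis elements and is a proper subset of the basis, since it contains no $V(r)$. It is closed under $*$ by the dual formulas. It is closed under multiplication by each indecomposable by \eqref{eq:2}--\eqref{eq:8} together with $I_{\operatorname{proj}}$ being an ideal. In particular, products of two bands lie in $\mathbb{Z}\{P(r)\}$ or in bands with the same $\eta$, and products of a band with $V(2,r')$ or with a string module again land in $I_\Theta$. Finally, take $\Theta=\{\eta\mid M_s(r,\eta)\in I \text{ for some } s,r\}$; Steps 1 and 2 then give $I=I_\Theta$.
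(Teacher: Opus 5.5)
Your plan follows the paper's proof: sandwich $I$ between $I_{\operatorname{proj}}$ and $I_{\operatorname{max}}$, pass between $r=0,1$ via (3.2), move in the length via (3.7), then read off $\Theta$. Steps 1 and 3 are fine. The gap is the one you flagged in Step 2, and it cannot be closed. From $M_s(r,\eta)\in I$, (3.7) only yields $M_t(0,\eta),M_t(1,\eta)\in I$ for $t\le s$. Formulas (3.2)--(3.5), (3.7), (3.8), together with commutativity of $r(D_4)$, already describe the product of a band module with \emph{every} indecomposable, so there is no other product left to try.

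Your truncated candidate passes the test. Fix $\eta_0\in\overline{\mathbf{k}}$ and $s_0\ge 1$, and let
\[
J=\mathbb{Z}\{V(2,r),\ P(r),\ M_t(r,\eta_0)\mid r\in\mathbb{Z}_2,\ 1\le t\le s_0\}.
\]
For $t\le s_0$, the product $M_t(r,\eta_0)\otimes X$ behaves as follows:
\begin{itemize}
\item if $X=V(r')$, it is $M_t(r+r',\eta_0)$ by (3.2);
\item if $X=\Omega^mV(r')$ with $m\neq 0$, it is $M_t(\cdot,\eta_0)\oplus(\mathrm{proj})$ by (3.8);
\item if $X$ is $V(2,r')$, $P(r')$, or $M_u(r',\alpha)$ with $\alpha\neq\eta_0$, it is projective by (3.3)--(3.5);
\item if $X=M_u(r',\eta_0)$, its non-projective part is $M_{\min(t,u)}(0,\eta_0)\oplus M_{\min(t,u)}(1,\eta_0)$ by (3.7), and $\min(t,u)\le s_0$.
\end{itemize}
Combined with $M_t(r,\eta_0)^*\cong M_t(r+1,\eta_0)$ and $V(0)\notin J$, this makes $J$ a representation ideal. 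It contains $M_{s_0}(0,\eta_0)$ but not $M_{s_0+1}(0,\eta_0)$, so $J\neq I_\Theta$ for every $\Theta\subseteq\overline{\mathbf{k}}$.

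So the lemma is false as stated. The paper's proof contains exactly this error: it claims that (3.2) and (3.7) give $M_s(0,\eta),M_s(1,\eta)\in I$ for all $s\ge 1$ once some $M_{s_1}(r,\eta)\in I$, but they give this only for $s\le s_1$.

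Your sandwich argument, Step 1, and the downward half of Step 2, combined with the check above, prove the corrected classification you anticipated. The nonzero representation ideals of $r(D_4)$ are precisely
\[
I_{\operatorname{proj}}+\mathbb{Z}\{M_t(r,\eta)\mid r\in\mathbb{Z}_2,\ \eta\in\overline{\mathbf{k}},\ 1\le t\le s_\eta\}
\]
for arbitrary functions $\eta\mapsto s_\eta\in\{0,1,2,\dots\}\cup\{\infty\}$. Bands with distinct parameters multiply into projectives by (3.3), so the conditions for different $\eta$ do not interact. The ideals $I_\Theta$ are exactly the cases with all $s_\eta\in\{0,\infty\}$. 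The paper's claim that there are infinitely many representation ideals between $I_{\operatorname{proj}}$ and $I_{\operatorname{max}}$ still holds.
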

\begin{proof}
First, let $\Theta \subseteq \bar{\mathbf{k}}$. Since $I_{\emptyset} = I_{\text{proj}}$ is the unique minimal representation ideal of $r(D_4)$, we have $I_{\text{proj}} \subseteq I_{\Theta}$. Equations (3.2)--(3.5) and (3.7)--(3.8), together with the properties of dual modules, ensure that $I_{\Theta}$ is closed under Green ring multiplication and the dual operator; hence, it is indeed a representation ideal.
Conversely, let $I$ be an arbitrary representation ideal. The natural inclusions $I_{\text{proj}} \subseteq I \subseteq I_{\text{max}}$ (where $I_{\text{max}} = I_{\bar{\mathbf{k}}}$) imply that $I$ contains all $V(2,r)$ and $P(r)$. Furthermore, the equations (3.2) and (3.7) imply that if $M_{s_1}(r, \eta) \in I$ for some $s_1 \ge 1$ and $r \in \mathbb{Z}_2$, then $M_s(0, \eta), M_s(1, \eta) \in I$ for all $s \ge 1$. By defining $\Theta = \{ \eta \in \bar{\mathbf{k}} \mid M_s(r, \eta) \in I \text{ for some } s \ge 1 \text{ and } r \in \mathbb{Z}_2 \}$, it immediately follows that $I = I_{\Theta}$.
\end{proof}

\begin{lemma}\label{lem:gamma_vals}
In the Green ring $r(D_4)$, the gamma invariants relative to $\mathfrak{X}_{\operatorname{max}}$ satisfy:
\begin{itemize}
    \item $\gamma_{\operatorname{max}}(V(r)) = \gamma_{\operatorname{max}}(\Omega^{\pm s} V(r)) = 1$ for all $s \geq 1$.
    \item $\gamma_{\operatorname{max}}(V(2,r)) = \gamma_{\operatorname{max}}(P(r)) = \gamma_{\operatorname{max}}(M_s(r, \eta)) = 0$.
\end{itemize}
\end{lemma}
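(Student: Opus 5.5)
The plan is to compute $c_n^{\max}(x) = \dim_{\mathbf{k}} \operatorname{core}_{\max}(x^n)$ directly for each indecomposable $x$, using the tensor product rules (3.1)--(3.8) together with the description of $I_{\operatorname{max}}$ in Lemma \ref{lem:ideals}.

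\textbf{Second bullet.} The modules $V(2,r)$, $P(r)$ and $M_s(r,\eta)$ all lie in $I_{\operatorname{max}}$ by Lemma \ref{lem:ideals}. Since $I_{\operatorname{max}}$ is an ideal, every power $x^n$ also lies in $I_{\operatorname{max}}$. Hence $\operatorname{core}_{\max}(x^n)=0$ for all $n$. By the convention stated in Section~\ref{1} (the case $x\in R_{\geq0}\cap I_{\mathfrak{X}}$), we get $\gamma_{\max}(x)=0$.

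\textbf{$V(r)$.} By (3.1), $V(r)^n \cong V(nr)$. This is one-dimensional and not in $I_{\operatorname{max}}$, so $c_n^{\max}=1$ and $\gamma_{\max}(V(r))=1$.

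\textbf{String modules.} Here I would iterate (3.6) by induction on $n$ to get
\[
(\Omega^{\pm s}V(r))^{n} \cong \Omega^{\pm ns}V(nr)\oplus(\operatorname{proj}).
\]
At each step, the projective part tensored with anything stays projective, because $I_{\operatorname{proj}}$ is an ideal. So only the string summand carries forward. Projective summands lie in $I_{\operatorname{proj}}\subseteq I_{\operatorname{max}}$, and string modules are not in $I_{\operatorname{max}}$. Therefore
\[
\operatorname{core}_{\max}\bigl((\Omega^{\pm s}V(r))^n\bigr)=\Omega^{\pm ns}V(nr),
\qquad
c_n^{\max}=\dim \Omega^{\pm ns}V(nr).
\]

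\textbf{Main obstacle.} The key point is controlling $\dim\Omega^{m}V(r)$ as a function of $m$. I expect it to grow linearly in $|m|$; plausibly $\dim \Omega^{\pm m}V(r)=2|m|+1$, as for string modules over $D(H_4)$ in Chen's classification \cite{Chen1}. To justify polynomial growth without the exact formula, I would argue via Heller shifts. Since $\Omega^{m+1}V(r)$ is the kernel of a projective cover of $\Omega^mV(r)$, its dimension is at most $\dim P\cdot(\text{number of top composition factors of }\Omega^mV(r))$. The tops stay bounded by a linear function of $m$, which is read off from the explicit string shape in \cite{Chen1}.

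In any case $1\le c_n^{\max}\le Cn+C'$. Hence
\[
\sqrt[n]{c_n^{\max}}\to 1,
\]
which gives $\gamma_{\max}(\Omega^{\pm s}V(r))=1$. This completes the first bullet.
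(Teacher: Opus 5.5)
Your proposal is correct and follows essentially the same route as the paper: membership in $I_{\operatorname{max}}$ gives the second bullet, (3.1) handles $V(r)$, and iterating (3.6), with the projective summands discarded, reduces the string case to the linear growth $\dim \Omega^{m}V(r)=2|m|+1$. The paper simply quotes this dimension formula from Chen's work, so your Heller-shift fallback is unnecessary; on its own it would not give linear growth without the explicit string shape anyway.
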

\begin{proof}
Since $V(2,r)$, $P(r)$, and $M_s(r, \eta) \in I_{\operatorname{max}}$ by Lemma \ref{lem:ideals}, it follows that $\gamma_{\operatorname{max}}(V(2,r)) = \gamma_{\operatorname{max}}(P(r)) = \gamma_{\operatorname{max}}(M_s(r, \eta)) = 0$. For simple modules $V(r)$, since $\dim_{\mathbf{k}}(V(r)) = 1$, we have $\gamma_{\operatorname{max}}(V(r))=1$.  Moreover,  from \cite{Chen2}  we know that $\dim_{\mathbf{k}}(\Omega^m V(r)) = 2m + 1$ for all $m \in \mathbb{Z}$ and $r \in \mathbb{Z}_2$. Consequently,  by Equation \eqref{eq:6}, we have
$$
\begin{aligned}
\gamma_{\operatorname{max}}(\Omega^{\pm s} V(r)) 
&= \lim_{n \to \infty} \sqrt[n]{\dim_{\mathbf{k}}\operatorname{core}_{\mathfrak{X}_{\operatorname{max}}}((\Omega^{\pm s} V(r))^n)} \\
&= \lim_{n \to \infty} \sqrt[n]{\dim_{\mathbf{k}}(\Omega^{\pm ns} V(nr))} \\
&= \lim_{n \to \infty} \sqrt[n]{2ns+1} = 1.
\end{aligned}
$$
\end{proof}

\begin{theorem}\label{thm:homomorphism}
For the Green ring $r(D_4)$, the map $\gamma_{\operatorname{max}}$ induces a homomorphism from $r(D_4)_{\geq 0}$ to $\mathbb{Z}_{\geq 0}$. That is, for any $x, y \in r(D_4)_{\geq 0}$:
\[ \gamma_{\operatorname{max}}(x+y) = \gamma_{\operatorname{max}}(x) + \gamma_{\operatorname{max}}(y), \quad \gamma_{\operatorname{max}}(xy) = \gamma_{\operatorname{max}}(x)\gamma_{\operatorname{max}}(y). \]
\end{theorem}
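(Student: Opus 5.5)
We argue through the quotient ring $R_{\operatorname{max}} = r(D_4)/I_{\operatorname{max}}$. By Lemma \ref{lem:ideals}, its $\mathbb{Z}$-basis consists of the images of the string modules $\Omega^m V(r)$ for $m \in \mathbb{Z}$ and $r \in \mathbb{Z}_2$, with $\Omega^0 V(r) = V(r)$. Equations \eqref{eq:1} and \eqref{eq:6} give
\[
\overline{\Omega^m V(r)}\cdot\overline{\Omega^n V(r')}=\overline{\Omega^{m+n}V(r+r')}.
\]
Hence $R_{\operatorname{max}}$ is isomorphic to the group ring $\mathbb{Z}[\mathbb{Z}\times\mathbb{Z}_2]$, and the basis elements form a group $G\cong\mathbb{Z}\times\mathbb{Z}_2$ of monomials. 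The plan is to show that $\gamma_{\operatorname{max}}(x)$ equals the augmentation $\varepsilon(\overline{x}) = \sum_{g\in G}[x:g]$, that is, the number of string-module summands of $x$ counted with multiplicity. This map is additive and multiplicative on non-negative elements, and it takes values in $\mathbb{Z}_{\geq 0}$, which proves the theorem.

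\textbf{Step 1 (core of powers).} Write $x = x' + x''$ with $x'' \in I_{\operatorname{max}}$. Since $I_{\operatorname{max}}$ is an ideal, $\operatorname{core}_{\operatorname{max}}(x^n) = \operatorname{core}_{\operatorname{max}}((x')^n)$. Write $x' = \sum_{(m,r)} a_{m,r}\,\Omega^m V(r)$ with $a_{m,r} \geq 0$, finitely many nonzero, and put $N = \sum a_{m,r} = \varepsilon(\overline{x})$. Expanding $(x')^n$ and applying \eqref{eq:6} to each term, the core is $\sum c_{(M,R)}\,\Omega^{M}V(R)$. The coefficients are non-negative, no cancellation can occur, and $\sum c_{(M,R)} = N^n$. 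Because $\dim \Omega^M V(R) = 2|M|+1$, we obtain
\[
N^n \leq c_n^{\operatorname{max}}(x) \leq N^n\,(2nK+1),
\]
where $K = \max\{|m| : a_{m,r} \neq 0\}$, using $|M| \leq nK$ for every resulting term. Taking $n$-th roots gives $\gamma_{\operatorname{max}}(x) = N$, and $\gamma_{\operatorname{max}}(x) = 0$ when $N = 0$. This generalizes the computation in Lemma \ref{lem:gamma_vals}.

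\textbf{Step 2 (homomorphism).} Additivity holds because $\varepsilon(\overline{x+y}) = \varepsilon(\overline{x}) + \varepsilon(\overline{y})$. Multiplicativity follows because the quotient map is a ring homomorphism and the augmentation of the group ring is multiplicative, so $\varepsilon(\overline{xy}) = \varepsilon(\overline{x})\,\varepsilon(\overline{y})$. Non-negativity of $xy$ is automatic.

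\textbf{Main obstacle.} The delicate point is Step 1. We must check that \eqref{eq:6} holds for all $m, n \in \mathbb{Z}$, including $m = 0$, mixed signs, and $m + n = 0$; in the last case the product should reduce to $V(r+r')$ plus projectives. We must also make sure that the projective parts and the $(\operatorname{proj})$ terms in \eqref{eq:6} land in $I_{\operatorname{max}}$, so that they never contribute to the core. To settle this, we will cite Chen's formulas in \cite{Chen2} directly for these edge cases. Once they are in place, the polynomial bound on dimensions makes the $n$-th root limit immediate.
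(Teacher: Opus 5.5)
Your proof is correct. Its lower bound is the same as the paper's: you discard the $I_{\operatorname{max}}$-part, expand $(x')^n$, use \eqref{eq:6} to see that each product of string modules contributes exactly one string summand modulo projectives, and use $\dim\Omega^M V(R)\geq 1$ to get $c_n^{\operatorname{max}}(x)\geq N^n$.

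The two routes differ in the remaining ingredients. For the upper bound, the paper invokes Benson's subadditivity \cite[Theorem 1.50]{Benson-Banach} together with Lemma \ref{lem:gamma_vals}. You instead bound $c_n^{\operatorname{max}}(x)\leq N^n(2nK+1)$ directly from $\dim\Omega^M V(R)=2|M|+1$ and $|M|\leq nK$. Your squeeze is elementary and self-contained, but it relies on the explicit polynomial growth of string dimensions; the subadditivity route needs only the values of $\gamma_{\operatorname{max}}$ on basis elements.

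For multiplicativity, the paper treats a representative example and says the product rule ``follows by a similar calculation''. You identify $\gamma_{\operatorname{max}}$ with the augmentation of $R_{\operatorname{max}}\cong\mathbb{Z}[\mathbb{Z}\times\mathbb{Z}_2]$, which gives additivity and multiplicativity at once for arbitrary non-negative elements. All you actually use is that a product of two basis elements of $R_{\operatorname{max}}$ is again a single basis element.

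Finally, the edge cases of \eqref{eq:6} that you flag ($m=0$, mixed signs, $m+n=0$) need no separate check in \cite{Chen2}. They follow from $\Omega^{\pm1}M\otimes N\cong\Omega^{\pm1}(M\otimes N)\oplus(\operatorname{proj})$, which holds for any finite-dimensional Hopf algebra, together with $\Omega^mV(r)\cong\Omega^mV(0)\otimes V(r)$. The paper itself uses \eqref{eq:6} in this generality.
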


\begin{proof}
Every element in $r(D_4)_{\ge 0}$ is a finite linear combination of indecomposable modules. For simplicity, let us consider the following example:
\[ x = aV(r_1) + b\Omega^{s_1} V(r_2) + c\Omega^{-s_2} V(r_3) + dV(2,r_4) + eP(r_5) + fM_{s_3}(r_6,\eta) \]
where $a, b, c, d, e, f \in \mathbb{Z}_{\geq 0},\ s_{i}\geq 1$. Let $x_{\text{core}} = aV(r_1) + b\Omega^{s_1} V(r_2) + c\Omega^{-s_2} V(r_3)$. By definition, elements in $I_{\operatorname{max}}$ do not contribute to the core. Thus:
\begin{align*}
& \dim \operatorname{core}_{\mathfrak{X}_{\operatorname{max}}} \left( x^n \right) = \dim \operatorname{core}_{\mathfrak{X}_{\operatorname{max}}} \left( (x_{\text{core}})^n \right) \\
&= \dim \operatorname{core}_{\mathfrak{X}_{\operatorname{max}}} \sum_{n_1+n_2+n_3=n} \frac{n!}{n_1! n_2! n_3!} (aV(r_1))^{n_1} (b\Omega^{s_1} V(r_2))^{n_2} (c\Omega^{-s_2} V(r_3))^{n_3}.
\end{align*}
Applying Equation \eqref{eq:6} and noting that $(\text{proj}) \in I_{\operatorname{max}}$, we have:
\begin{align*}
& \dim \operatorname{core}_{\mathfrak{X}_{\operatorname{max}}} \left( \sum_{n_1+n_2+n_3=n} \frac{n!}{n_1! n_2! n_3!} a^{n_1} b^{n_2} c^{n_3} \Omega^{n_2s_1-n_3s_2} V(r''') \right) \\
&\geq \sum_{n_1+n_2+n_3=n} \frac{n!}{n_1! n_2! n_3!} a^{n_1} b^{n_2} c^{n_3} = (a+b+c)^n
\end{align*}
taking $\dim(V(r))=1$ and $\dim(\Omega^m V(r)) \geq 1$. Taking the $n$-th root and the limit as $n \to \infty$, we obtain $\gamma_{\max}(x) \geq a+b+c$. By the subadditivity of the gamma invariant \cite[Theorem 1.50]{Benson-Banach}, we also have $\gamma_{\max}(x) \leq a\gamma_{\max}(V(r_1)) + b\gamma_{\max}(\Omega^{s_1} V(r_2))+ c\gamma_{\max}(\Omega^{-s_2} V(r_3) )= a+b+c$. Hence, $\gamma_{\max}(x) = a+b+c$.

This linearity immediately implies $\gamma_{\max}(x + y) = \gamma_{\max}(x) + \gamma_{\max}(y)$. Furthermore, since the tensor product of the core components remains in the core (modulo projectives), the multiplicative property $\gamma_{\max}(xy) = \gamma_{\max}(x)\gamma_{\max}(y)$ follows by a similar calculation.
\end{proof}

\begin{remark}\label{rem:gamma_proj} \leavevmode
In contrast, $\gamma_{\operatorname{proj}}$ is not a ring homomorphism. As an example, while $\gamma_{\operatorname{proj}}(M_s(r, \eta)) = 2$, the tensor product $M_s(r, \alpha) \otimes M_t(r', \eta)$ for $\alpha \neq \eta$ is an element of $I_{\operatorname{proj}}$, which yields $\gamma_{\operatorname{proj}}(M_s \otimes M_t) = 0 \neq 2 \times 2$. This highlights the sensitivity of the gamma invariant to the choice of representation ideal.
\end{remark}

\section{Hopf Algebras of Finite Representation Type}\label{4}

In this section, we assume that $H$ is a finite-dimensional non-semisimple Hopf algebra of finite representation type and that $\mathcal{P}_+ = \mathcal{P}_- = I_{\operatorname{max}}$. For any $x \in R$, let $\overline{x}$ denote the corresponding element in the quotient $R_{\operatorname{max}} = R/I_{\operatorname{max}}$. According to Theorem \ref{2.3}, $R_{\operatorname{max}}$ is a transitive fusion ring. We can define the \textit{Frobenius-Perron dimension} of $\overline{x_i}$ as the maximal non-negative eigenvalue of the matrix representing left multiplication by $\overline{x_i}$. The function $\operatorname{FPdim}_{R_{\mathbb{C},\operatorname{max}}}: R_{\mathbb{C},\operatorname{max}} \to \mathbb{C}$ is obtained via linear extension from the basis $\{\overline{x_i}\}_{i \in \mathfrak{J} \setminus \mathfrak{X}_{\operatorname{max}}}$.

Following \cite{Benson-Banach}, for any element $x = \sum_{i \in \mathfrak{I}} a_i x_i \in R_{\mathbb{C}}$, we define its norm as:
\[ \|x\| = \sum_{i \in \mathfrak{I}} |a_i| \dim_{\mathbf{k}}(x_i). \]
It is straightforward to verify that $R_{\mathbb{C}}$ forms a normed algebra. Since a finite-dimensional normed space is complete, $R_{\mathbb{C}}$ is a Banach algebra \cite{Marcoux1}. Furthermore, as every finite-dimensional subspace is closed, every ideal of $R_{\mathbb{C}}$ is closed. By \cite[Lemma 2.11]{Benson-Banach}, $R_{\mathbb{C}, \mathfrak{X}}$ is isometrically isomorphic to $R_{\mathbb{C}}/\langle \mathfrak{X} \rangle_{\mathbb{C}}$. Recall that if $I$ is a closed ideal in a Banach algebra $A$, then $A/I$ is a Banach algebra under the quotient norm: $\|\overline{x}\| = \| \sum_{i \in \mathfrak{J}} a_i x_i \|_{\mathfrak{X}}=\inf_{y \in I} \|x+y\|$. Thus, $R_{\mathbb{C},\mathfrak{X}}$ is a Banach algebra with the norm:
\[ \| \sum_{i \in \mathfrak{J}} a_i x_i \|_{\mathfrak{X}} = \sum_{i \in \mathfrak{J} \setminus \mathfrak{X}} |a_i| \dim_{\mathbf{k}}(x_i). \]

The following lemma identifies the gamma invariant as a spectral radius, which is a specialized case of \cite[Theorem 3.12]{Benson-Banach}.

\begin{lemma}\label{4.1}
If $x \in R_{\geq 0}$, then $\gamma_{\mathfrak{X}}(x)$ is equal to the spectral radius of the image of $x$ in $R_{\mathbb{C},\mathfrak{X}}$.
\end{lemma}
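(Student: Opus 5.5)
The plan is to apply Gelfand's spectral radius formula in the Banach algebra $R_{\mathbb{C},\mathfrak{X}}$. The key is to express its quotient norm in terms of cores, so that the norm of the $n$-th power is exactly $c_n^{\mathfrak{X}}(x)$. Then $\rho(\overline{x}) = \lim_n \|\overline{x}^n\|_{\mathfrak{X}}^{1/n}$ will coincide with $\gamma_{\mathfrak{X}}(x)$ by its definition.

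\textbf{Step 1: the norm of a power is $c_n^{\mathfrak{X}}(x)$.} Take $x \in R_{\geq 0}$ and write $x^n = \sum_i b_i^{(n)} x_i$. Since $R$ is a $\mathbb{Z}_+$-ring and $x$ is non-negative, every $b_i^{(n)}$ is a non-negative integer. The image of $x^n$ in $R_{\mathbb{C},\mathfrak{X}}$ is $\overline{x}^{\,n}$, because the quotient map is a ring homomorphism. By the norm formula recorded just before the lemma,
\[
\|\overline{x}^{\,n}\|_{\mathfrak{X}} = \sum_{i \in \mathfrak{I}\setminus\mathfrak{X}} |b_i^{(n)}| \dim_{\mathbf{k}}(x_i) = \sum_{i\in\mathfrak{I}\setminus\mathfrak{X}} b_i^{(n)}\dim_{\mathbf{k}}(x_i) = \dim_{\mathbf{k}}\operatorname{core}_{\mathfrak{X}}(x^n) = c_n^{\mathfrak{X}}(x).
\]
This formula uses the isometric identification of $R_{\mathbb{C},\mathfrak{X}}$ with $R_{\mathbb{C}}/\langle\mathfrak{X}\rangle_{\mathbb{C}}$ from \cite[Lemma 2.11]{Benson-Banach}. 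That identification rests on the fact that adding an element of $I_{\mathbb{C},\mathfrak{X}}$ only changes the coefficients indexed by $\mathfrak{X}$, so the infimum is attained by deleting them.

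\textbf{Step 2: Gelfand's formula.} $R_{\mathbb{C},\mathfrak{X}}$ is a unital Banach algebra. Its unit is $\overline{\mathbf{1}}$, which is nonzero because $I_{\mathfrak{X}}$ is proper and hence does not contain $\mathbf{1}$. Gelfand's formula then gives
\[
\rho(\overline{x}) = \lim_{n\to\infty}\|\overline{x}^{\,n}\|_{\mathfrak{X}}^{1/n} = \lim_{n\to\infty} \sqrt[n]{c_n^{\mathfrak{X}}(x)},
\]
and this limit exists. It therefore agrees with the limsup defining $\gamma_{\mathfrak{X}}(x)$.

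\textbf{Step 3: the degenerate case.} If $x\in I_{\mathfrak{X}}$, then $\overline{x}=0$, so the spectral radius is $0$, matching $\gamma_{\mathfrak{X}}(x)=0$. The same computation also covers this case directly.

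\textbf{Main obstacle: completeness for infinite representation type.} If $\mathfrak{I}$ is infinite, $R_{\mathbb{C}}$ is not finite-dimensional, so "finite-dimensional implies complete" does not apply. Gelfand's formula needs a Banach algebra, so I would pass to the completion: the $\ell^1$-type weighted completion of $R_{\mathbb{C}}$, and similarly of the quotient. Submultiplicativity of the norm holds because $\dim$ is multiplicative and the structure constants are non-negative, so the completion is again a unital Banach algebra containing $\overline{x}$.

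Gelfand's formula is then applied in this completion. The resulting spectral radius is the one meant in \cite[Theorem 3.12]{Benson-Banach}. It depends only on the norms of powers, so it is unchanged by passing to the completion. In the finite-representation-type setting of this section, the completion step is vacuous and Steps 1--2 suffice as written.
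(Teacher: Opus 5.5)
Your proposal is correct and is essentially the argument behind the paper's citation of Benson's Theorem 3.12; the paper itself gives no separate proof. In the finite-dimensional, hence complete, algebra $R_{\mathbb{C},\mathfrak{X}}$, the quotient norm of $\overline{x}^{\,n}$ is exactly $c_n^{\mathfrak{X}}(x)$, and Gelfand's formula then gives $\rho(\overline{x})=\gamma_{\mathfrak{X}}(x)$. Your remark on infinite representation type is the right fix, but there it is only the spectrum in the completion that equals the Gelfand limit. The spectrum in the incomplete algebra can be far larger: for $D(H_4)$, $\overline{\Omega V(0)}-\lambda\overline{\mathbf{1}}$ is non-invertible in $R_{\mathbb{C},\operatorname{proj}}$ for every $\lambda\neq 0$, although $\gamma_{\operatorname{proj}}(\Omega V(0))=1$.
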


\begin{theorem}\label{4.2}
If $x, y \in R_{\geq 0}$, then $\gamma_{\operatorname{max}}(x) = \operatorname{FPdim}_{R_{\mathbb{C}, \operatorname{max}}}(\overline{x})$. This implies the following ring homomorphism properties:
\begin{align*}
\gamma_{\max}(xy) &= \gamma_{\max}(x)\gamma_{\max}(y), \\
\gamma_{\max}(x + y) &= \gamma_{\max}(x) + \gamma_{\max}(y).
\end{align*}
\end{theorem}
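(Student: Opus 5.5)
By Lemma \ref{4.1}, $\gamma_{\max}(x)$ equals the spectral radius $\rho(\overline{x})$ of $\overline{x}$ in the Banach algebra $R_{\mathbb{C},\max}$. Because $H$ has finite representation type, $R_{\mathbb{C},\max}$ is a finite-dimensional complex algebra. The spectrum of $\overline{x}$ in it is therefore the set of eigenvalues of left multiplication $L_{\overline{x}}$ on $R_{\mathbb{C},\max}$. This uses that $R_{\mathbb{C},\max}$ is unital, so that invertibility of $\overline{x}-\lambda\mathbf{1}$ is equivalent to invertibility of $L_{\overline{x}}-\lambda$. Hence $\rho(\overline{x})$ is the spectral radius of the matrix of $L_{\overline{x}}$ in the basis $\{\overline{x_i}\}_{i\in\mathfrak{J}\setminus\mathfrak{X}_{\max}}$. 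For $x\in R_{\geq 0}$ this matrix has non-negative integer entries. By Perron--Frobenius, its spectral radius is itself a non-negative eigenvalue and is the maximal one. Thus $\gamma_{\max}(x_i)=\operatorname{FPdim}(\overline{x_i})$ on basis elements.

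The substantive step is to show that $\overline{x}\mapsto$ (Perron--Frobenius eigenvalue of $L_{\overline{x}}$) is additive and multiplicative on $R_{\geq 0}$. Only then does the linearly extended $\operatorname{FPdim}$ agree with $\rho$ for arbitrary non-negative $x$, not just basis elements. I plan to follow the standard fusion-ring argument (Etingof et al.). By Theorem \ref{2.3}, the hypothesis $\mathcal{P}_+=\mathcal{P}_-=I_{\max}$ makes $R_{\max}$ a transitive $\mathbb{Z}_+$-ring with finite basis. Its duality has $\overline{x}^{**}=\overline{x}$ by Lemma \ref{2.1}, and its pairing $[\overline{x_i}\,\overline{x_j}:\mathbf{1}]=\delta_{j,i^*}$ is that of a fusion ring.

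Set $R_0:=\sum_i \overline{x_i}$, summing over the finite basis. By transitivity, $L_{R_0}$ has strictly positive entries: for each $j,k$ some $\overline{x_i}$ satisfies $[\overline{x_i}\,\overline{x_j}:\overline{x_k}]>0$. Perron--Frobenius then gives a unique, up to scale, positive eigenvector $v$ of right multiplication, which I construct as follows. Take the Perron eigenvector of $R_{R_0}$, realized as a strictly positive element $v\in R_{\mathbb{R},\max}$ with $v R_0=\lambda v$. For every basis element, $\overline{x_i}v$ is also a positive eigenvector of $R_{R_0}$ with eigenvalue $\lambda$, since left and right multiplications commute by associativity. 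By uniqueness, $\overline{x_i}v=d_i v$ with $d_i>0$.

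The assignment $\overline{x_i}\mapsto d_i$, extended linearly, is then a ring homomorphism $d:R_{\max}\to\mathbb{R}$ that is positive on the basis. For any $x\in R_{\geq 0}$ we have $L_{\overline{x}}v=d(\overline{x})v$ with $v$ strictly positive. A non-negative matrix with a strictly positive eigenvector has that eigenvalue as its spectral radius. Therefore $\rho(\overline{x})=d(\overline{x})=\sum_i a_i\,d(\overline{x_i})$, which equals $\operatorname{FPdim}(\overline{x})$ as defined by linear extension. This gives $\gamma_{\max}(x)=\operatorname{FPdim}_{R_{\mathbb{C},\max}}(\overline{x})$.

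Additivity is immediate from linearity of $d$. Multiplicativity follows because $d$ is a ring homomorphism and $\overline{xy}=\overline{x}\,\overline{y}$. Here $xy\in R_{\geq 0}$ and reduction modulo $I_{\max}$ is a ring map, so $\gamma_{\max}(xy)=d(\overline{x}\,\overline{y})=\gamma_{\max}(x)\gamma_{\max}(y)$.

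The main obstacle is the uniqueness and commutation step in the construction of $v$. Transitivity as stated in the paper requires $Z$ to appear in both $XY_1$ and $Y_2X$. I need to check that this positivity applies to the correct one-sided multiplication matrix, since $R_{\max}$ need not be commutative. If it does not, I would switch to $L_{R_0}$ and left eigenvectors instead.
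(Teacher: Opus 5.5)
Your proposal is correct and follows essentially the same route as the paper: apply Perron--Frobenius to right multiplication by $\sum_i \overline{x_i}$, then use that left multiplications commute with it and preserve its one-dimensional eigenspace, giving a homomorphism $d$ with $\overline{x}\,v=d(\overline{x})v$, which you identify with $\operatorname{FPdim}$ and, via Lemma \ref{4.1}, with $\gamma_{\max}$. You also supply details the paper glosses over (that the spectrum equals the eigenvalues of $L_{\overline{x}}$, and that a non-negative matrix with a strictly positive eigenvector has that eigenvalue as its spectral radius), and the obstacle you flag does not arise: the paper's transitivity is two-sided, and the proof of Theorem \ref{2.3} explicitly produces the $XY_1$ side, which is exactly what makes right multiplication by $\sum_i\overline{x_i}$ strictly positive.
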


\begin{proof}
If $x \in I_{\operatorname{max}}$, it follows from the definition that $\gamma_{\operatorname{max}}(x) = 0$. Let $\{\overline{x_i}\}_{i \in \mathfrak{J} \setminus \mathfrak{X}_{\operatorname{max}}}$ be the basis of $R_{\operatorname{max}}$ and let $\overline{b} = \sum_{i \in \mathfrak{J} \setminus \mathfrak{X}_{\operatorname{max}}} \overline{x_i}$. Consider the matrix $B$ of right multiplication by $\overline{b}$ relative to this basis. By Theorem \ref{2.3}, $R_{\operatorname{max}}$ is a transitive ring, which implies that $B$ is a primitive matrix with strictly positive entries. According to the Frobenius-Perron Theorem, $B$ has a unique dominant positive eigenvalue $\lambda$ with a one-dimensional eigenspace spanned by a vector $\overline{u} = \sum a_i \overline{x_i}$ where $a_i > 0$.

For any $\overline{x_j}$, we have $\overline{x_j u \vphantom{b}} \overline{b} = \lambda \overline{x_j u \vphantom{b}}$, implying $\overline{x_j}\overline{u}$ is also an eigenvector for $\lambda$. By the one-dimensionality of the eigenspace, $\overline{x_j}\overline{u} = d(\overline{x_j})\overline{u}$ for some homomorphism $d$. Since $\overline{u}$ is strictly positive, $d(\overline{x}) = \operatorname{FPdim}_{R_{\mathbb{C},\operatorname{max}}}(\overline{x})$, which coincides with $\rho(\overline{x})$ by the definition of the spectral radius. By Lemma \ref{4.1}, $\gamma_{\operatorname{max}}(x) = \operatorname{FPdim}_{R_{\mathbb{C}, \operatorname{max}}}(\overline{x})$.
\end{proof}

\begin{remark}\label{4.3}
\begin{enumerate}
\item Theorem \ref{4.2} implies that extending $\gamma_{\operatorname{max}}$ from $R_{\geq 0}$ to $R$ by additivity yields a ring homomorphism.
\item In contrast to \cite{Benson-Banach}, our theorem does not require the Green ring to be commutative; it requires only that $\mathcal{P}_+ = \mathcal{P}_- = I_{\operatorname{max}}$ and that $H$ is a Hopf algebra of finite representation type. However, we have not found any examples where $\mathcal{P}_+ \neq \mathcal{P}_-$.
\item If $I_{\operatorname{max}}=I_{\operatorname{proj}}$, the invariant $\gamma_{\operatorname{proj}}(x)$ is an algebraic integer, addressing open questions in \cite{Benson-Banach}.
\end{enumerate}
\end{remark}
To illustrate, we consider the Taft algebra $H_n(q)$ for an integer $n \ge 2$ and a primitive $n$-th root of unity $q$. Recall that $H_n(q)$ is an $n^2$-dimensional Hopf algebra generated by two elements $g$ and $h$. Up to isomorphism, there are exactly $n^2$ indecomposable $H_n(q)$-modules, which are given by the set $\{M(l,r) \mid 1 \leqslant l \leqslant n, r \in \mathbb{Z}_n\}$. Notably, $\dim_{\mathbf{k}}(M(l,r))=l$ and the projective modules are precisely those with $l = n$. Furthermore, the Green ring $r(H_n(q))$ is commutative and is generated by the classes $[M(1,-1)]$ and $[M(2,0)]$. To explicitly compute the multiplication within $r(H_n(q))$, we recall the tensor products of these indecomposable modules (see \cite{Chen3}).

\begin{lemma} \label{4.4}
Let $1 \leqslant l, l' \leqslant n$ and $r, r' \in \mathbb{Z}_n$, with $l_0 = \min\{l, l'\}$ and $l_1 = \max\{l, l'\}$. We then have the following tensor product isomorphisms for indecomposable $H_n(q)$-modules $M(l,r)$:
\begin{enumerate}
    \item[(1)] If $l + l' \leqslant n$, then \vspace{-\baselineskip}
    \begin{equation} \tag{4.1} \label{eq:4.1}
        M(l, r) \otimes M(l', r') \cong \bigoplus_{i=1}^{l_0} M(|l - l'| - 1 + 2i, r + r' + i - l_0).
    \end{equation}

    \item[(2)] If $l + l' > n$, then \vspace{-\baselineskip}
    \begin{equation} \tag{4.2} \label{eq:4.2}
    \begin{split}
        M(l, r) \otimes M(l', r') &\cong \Biggl( \bigoplus_{i=1}^{l+l'-n} M(n, r + r' + 1 - i) \Biggr) \\
        &\quad \oplus \Biggl( \bigoplus_{i=1}^{n-l_1} M(|l - l'| - 1 + 2i, r + r' + i - l_0) \Biggr).
    \end{split}
    \end{equation}
\end{enumerate}
Note that the second direct sum vanishes when $n - l_1 < 1$. In particular, it follows directly from \eqref{eq:4.1} and \eqref{eq:4.2} that
\begin{equation}
M(1, r) \otimes M(l, r') \cong M(l, r+r'). \tag{4.3} \label{eq:4.3}
\end{equation}
\end{lemma}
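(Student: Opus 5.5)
This lemma is a recollection of the tensor product decomposition for the Taft algebra from \cite{Chen3}. A self-contained proof would build the modules $M(l,r)$ concretely and then decompose their tensor products. Take $M(l,r)$ to have basis $v_0,\dots,v_{l-1}$ with $g v_j = q^{r-j} v_j$ (or the paper's normalization), $h v_j = v_{j+1}$ and $h v_{l-1}=0$. Then $M(l,r)$ is uniserial of length $l$ with socle and top one-dimensional.

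The plan is to reduce everything to the two generators. First, verify \eqref{eq:4.3} directly. $M(1,r)$ is the one-dimensional module on which $g$ acts by a scalar and $h$ by $0$. Using $\Delta(h)=h\otimes 1+g\otimes h$ (or the convention in use), tensoring with $M(1,r)$ only twists the $g$-eigenvalues by $q^{r}$ and rescales the $h$-chain. This gives $M(1,r)\otimes M(l,r')\cong M(l,r+r')$.

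Next, compute $M(2,0)\otimes M(l,r)$ for $l<n$. The element $h$ acts nilpotently on the $2l$-dimensional tensor product. We determine its Jordan type, and hence the decomposition into uniserial summands, by finding the highest weight vectors, namely the kernel of $h$ on each $g$-eigenspace. A $q$-binomial computation shows $h$ has exactly two Jordan blocks, of sizes $l+1$ and $l-1$, when $l<n$. When $l=n$ there are two blocks of size $n$, which is consistent with $M(n,\cdot)$ being projective and $P\otimes V$ being projective. Reading off the $g$-weights of the top vectors gives
\[ M(2,r)\otimes M(l,r')\cong M(l+1,r+r')\oplus M(l-1,r+r'-1), \]
with the projective correction $M(n,\cdot)^{\oplus 2}$ when $l=n$.

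Finally, induct on $l_0=\min\{l,l'\}$. By the previous step, $M(l_0,r)$ can be written in $r(H_n(q))$ as a Chebyshev-type polynomial in $[M(2,0)]$ multiplied by a power of $[M(1,\cdot)]$. Expanding the product and using the recursion yields the Clebsch--Gordan pattern \eqref{eq:4.1} when $l+l'\le n$. When $l+l'>n$, the terms that would exceed length $n$ fold back into projective summands $M(n,\cdot)$, which produces \eqref{eq:4.2}.

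I expect the main obstacle to be the bookkeeping in the case $l+l'>n$. There one must track the $\mathbb{Z}_n$-labels $r+r'+1-i$ of the $l+l'-n$ projective summands, and confirm that exactly $n-l_1$ non-projective summands survive. I would settle this by a dimension count together with matching $g$-characters on the socles.
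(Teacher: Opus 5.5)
The paper gives no proof of this lemma; it quotes the decomposition from \cite{Chen3}. Your proposal supplies an actual proof, and its first half is sound. In your normalization, for $2\le l<n$ one has $h^{l}(v_0\otimes w_0)=c\,[l]\,v_1\otimes w_{l-1}$ with $c\neq0$ and the $q$-integer $[l]\neq0$. Also $h^{l+1}=0$ and $\dim\ker h=2$. This gives $M(2,r)\otimes M(l,r')\cong M(l+1,r+r')\oplus M(l-1,r+r'-1)$. For $l=n$ you need the labels, not just $M(n,\cdot)^{\oplus2}$: the answer is $M(n,r+r')\oplus M(n,r+r'-1)$.

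Now put $x=[M(2,0)]$, $a=[M(1,-1)]$, $F_1=1$, $F_2=x$ and $F_{k+1}=xF_k-aF_{k-1}$. Then $F_k=[M(k,0)]$ for $k\le n$, and $[M(l,r)][M(l',r')]=[M(1,r+r')]F_lF_{l'}$. The polynomial identity $F_lF_{l'}=\sum_{i=1}^{l_0}a^{l_0-i}F_{k_i}$, with $k_i=|l-l'|-1+2i$, gives \eqref{eq:4.1} at once when $l+l'\le n$, since then every $k_i\le n-1$.

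The gap is the case $l+l'>n$. Saying that the terms of length $>n$ ``fold back into projective summands'' while the others remain does not yield \eqref{eq:4.2}. Take $n=4$ and $l=l'=3$: then $F_3^2=a^2F_1+aF_3+F_5$. Keeping $aF_3$ would leave a summand $M(3,r+r'-1)$ that is not there, and the dimensions cannot balance. The true answer is $M(4,r+r')\oplus M(4,r+r'-1)\oplus M(1,r+r'-2)$. You anticipate that only $n-l_1$ non-projective summands survive, but nothing in the proposal removes the others.

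The missing step is the reduction of $F_k$ for $k>n$. From the recursion and the $l=n$ case above, induction on $j$ gives
\[
F_{n+j}=[M(n,0)]+[M(n,-j)]-[M(n-j,-j)],\qquad 0\le j\le n-1.
\]
So a term $a^{l_0-i}F_{k_i}$ with $k_i=n+j>n$ contributes two projectives. It also \emph{cancels} the term with index $i-j$, which has length $2n-k_i$ and the same label. A term with $k_i=n$ contributes one projective. The uncancelled non-projective terms are exactly those with $i\le n-l_1$. The projective labels form the consecutive run $r+r'+1-i$, $1\le i\le l+l'-n$, and together these give \eqref{eq:4.2}.

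Because the indecomposables form a $\mathbb{Z}$-basis of $r(H_n(q))$, this computation settles the decomposition by itself. Your proposed fallback, a dimension count plus the $g$-character of the socle, would not: the socle does not record the lengths of the summands.
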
 

\begin{lemma} \label{4.5}
For the Green ring $r(H_n(q))$, we have
$$I_{\operatorname{proj}}=I_{\operatorname{max}}=\mathbb{Z}\{M(n,r) \mid r \in \mathbb{Z}_n\}.$$
\end{lemma}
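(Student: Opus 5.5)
The argument has two parts. First, $I_{\operatorname{proj}}=\mathbb{Z}\{M(n,r)\mid r\in\mathbb{Z}_n\}$; second, every representation ideal is contained in this one, so $I_{\operatorname{max}}=I_{\operatorname{proj}}$.

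The first part is immediate from the recalled facts. The projective indecomposables are exactly the $M(n,r)$, and $I_{\operatorname{proj}}$ is spanned by the classes of projective modules.

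For the second part, recall from the preliminaries that $I_{\operatorname{max}}\subseteq \mathcal{P}_+\cap\mathcal{P}_-$. It therefore suffices to show that no $M(l,r)$ with $1\le l\le n-1$ lies in $\mathcal{P}_+$. Concretely, we show that $[M(l,r)\otimes M(l,r)^*:\mathbf{1}]\neq 0$ for such $l$.

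To identify the dual, use that $*$ preserves dimension and indecomposability, so $M(l,r)^*\cong M(l,r')$ for some $r'$. Since Lemma \ref{2.1}(1) pins down which partner can produce $\mathbf{1}$, it is enough to find one $r'$ with $\mathbf{1}$ a summand of $M(l,r)\otimes M(l,r')$.

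For $1\le l\le n-1$, apply \eqref{eq:4.1} when $2l\le n$ and \eqref{eq:4.2} when $2l>n$, in both cases with $l=l'$ and $l_0=l_1=l$.
\begin{itemize}
\item The term $i=1$ appears in the non-projective sum, because $n-l_1=n-l\ge1$ and $l_0=l\ge 1$.
\item That term is $M(1,\,r+r'+1-l)$.
\end{itemize}
Choosing $r'=l-1-r$ makes it $M(1,0)=\mathbf{k}$. Hence the trivial module is a summand of $M(l,r)\otimes M(l,l-1-r)$.

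By Lemma \ref{2.1}(1), this forces $M(l,l-1-r)\cong M(l,r)^*$ and $[M(l,r)M(l,r)^*:\mathbf{1}]=1$. So $M(l,r)\notin\mathcal{P}_+$, hence $M(l,r)\notin I_{\operatorname{max}}$. Combined with $I_{\operatorname{proj}}\subseteq I_{\operatorname{max}}$, this gives the claimed equality.

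The main point of care is the bookkeeping in \eqref{eq:4.2}: one must check that the $i=1$ term really survives, i.e.\ that $n-l_1\ge1$. This holds exactly because $l<n$. Everything else is a direct appeal to Lemma \ref{2.1} and the containment $I_{\operatorname{max}}\subseteq\mathcal{P}_+\cap\mathcal{P}_-$.
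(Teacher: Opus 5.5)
Your proof is correct and follows the same approach as the paper. You use \eqref{eq:4.1} and \eqref{eq:4.2} to show that $\mathbf{1}$ is a direct summand of $M(l,r)\otimes M(l,r)^*$ whenever $l<n$, so these modules lie outside $\mathcal{P}_+\supseteq I_{\operatorname{max}}$. The one difference is minor: the paper obtains $M(l,r)^*\cong M(l,l-r-1)$ by direct computation, whereas you read it off from the tensor formula via Lemma \ref{2.1}(1).
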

\begin{proof}
By a straightforward calculation, we obtain the isomorphism $M(l,r)^{*} \cong M(l,l - r - 1)$. It then follows from the equations \eqref{eq:4.1} and \eqref{eq:4.2} that the trivial module appears as a direct summand in the tensor product decomposition of $M(l,r) \otimes (M(l,r))^*$, unless $l=n$. Therefore, we have $\mathcal{P}_+ = \mathcal{P}_- =I_{\operatorname{proj}}=I_{\operatorname{max}}$.
\end{proof}

Consequently, the Taft algebra $H_n(q)$ possesses a unique representation ideal $I_{\operatorname{max}}$, making $R_{\operatorname{max}}$ its stable Green ring $r_{st}(H_n(q))$. We next calculate the gamma invariants of the standard bases of the Green ring $r(H_n(q))$. 
\begin{theorem} \label{4.22}
For the Taft algebra $H_n(q)$, we have
\[ \gamma_{\operatorname{max}}([M(l,r)]) = \frac{\sin\left(\frac{l\pi}{n}\right)}{\sin\left(\frac{\pi}{n}\right)}, \]
for all $1 \leqslant l \leqslant n$ and $r\in \mathbb{Z}_n$.
\end{theorem}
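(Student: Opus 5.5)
By Lemma \ref{4.5} we have $\mathcal{P}_+=\mathcal{P}_-=I_{\operatorname{max}}=I_{\operatorname{proj}}$, and $H_n(q)$ has finite representation type. So Theorem \ref{4.2} applies and gives $\gamma_{\operatorname{max}}(x)=\operatorname{FPdim}(\overline{x})$. Moreover, $\gamma_{\operatorname{max}}$ is additive and multiplicative on $R_{\geq 0}$, i.e.\ it is a ring homomorphism $d\colon r(H_n(q))\to\mathbb{R}$ taking non-negative values on the basis. The plan is to determine $d$ on the generators $[M(1,-1)]$ and $[M(2,0)]$ and then propagate to all $M(l,r)$ by a Chebyshev-type recursion.

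First, the invertible objects. By \eqref{eq:4.3}, $M(1,r)\otimes M(1,r')\cong M(1,r+r')$, so $[M(1,1)]^n=[M(1,0)]=\mathbf 1$. Hence $d([M(1,r)])$ is a non-negative real root of unity, i.e.\ equal to $1$. (Alternatively, by Lemma \ref{lem:gamma_vals}-style reasoning, $\dim=1$ and no power of $M(1,r)$ lies in $I_{\operatorname{max}}$, so $\gamma_{\operatorname{max}}=1$.) This agrees with the formula at $l=1$. Using \eqref{eq:4.3} again, $[M(l,r)]=[M(1,r-r')][M(l,r')]$, so $d([M(l,r)])$ is independent of $r$; write $d_l$ for this common value. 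For $l=n$ the module is projective, so it lies in $I_{\operatorname{max}}$ and $d_n=0=\sin(\pi)/\sin(\pi/n)$.

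Next, the recursion. Apply \eqref{eq:4.1} with $l'=2$ and $2\le l\le n-1$ (so that $l+2\le n+1$; the case $l=n-1$ falls under \eqref{eq:4.2}). This gives
\[
[M(2,0)][M(l,r)]=[M(l+1,r')]+[M(l-1,r'')]
\]
for suitable $r',r''$. When $l=n-1$, \eqref{eq:4.2} gives $M(2,0)\otimes M(n-1,r)\cong M(n,\ast)\oplus M(n-2,\ast)$, which is the same shape of identity with $d_n=0$. Applying $d$ and writing $t=d_2$, we get
\[
t\,d_l=d_{l+1}+d_{l-1},\qquad d_0:=0,\ d_1=1,
\]
so that $d_l=U_{l-1}(t/2)$ is a Chebyshev polynomial of the second kind. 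The boundary condition $d_n=0$ then forces $t/2=\cos(k\pi/n)$ for some $1\le k\le n-1$, and in that case $d_l=\sin(lk\pi/n)/\sin(k\pi/n)$.

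The main step is pinning down $k=1$. Positivity does this: $d_l=\gamma_{\operatorname{max}}(M(l,r))\ge 0$ for all $l$, and in fact $d_l>0$ for $1\le l\le n-1$, because these classes are nonzero in the transitive fusion ring $R_{\operatorname{max}}$ and FP dimensions of basis elements there are positive. Now $\sin(lk\pi/n)>0$ for all $1\le l\le n-1$ holds only when $k=1$; for $k\ge2$, taking $l$ with $lk/n\in(1,2)$ gives a negative value. Hence $t=2\cos(\pi/n)$, and so $d_l=\sin(l\pi/n)/\sin(\pi/n)$, as claimed.

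The delicate points are checking the $l=n-1$ and $l=1$ boundary instances of the tensor rules (the $l=1$ case gives $M(2)\otimes M(1)=M(2)$, consistent with $d_0=0$), and justifying the strict positivity $d_l>0$. For the latter I would cite the Frobenius--Perron argument in the proof of Theorem \ref{4.2}: there $\overline{x_j}\,\overline u=d(\overline{x_j})\overline u$ with $\overline u$ strictly positive, and since $\overline{x_j}\neq 0$ has non-negative structure constants, this forces $d>0$.
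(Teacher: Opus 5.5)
Your proof is correct, and it reaches the key value $\gamma_{\operatorname{max}}([M(2,0)])=2\cos\frac{\pi}{n}$ by a different route from the paper.

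\textbf{The paper's route.} It obtains $d_2$ from an external computation. It quotes from Cao and Li the characteristic polynomial $\prod_{k,j}(\lambda-\sigma_{k,j})$ of multiplication by $[M(2,0)]$ on $r_{st}(H_n(q))$ and reads off the Frobenius--Perron eigenvalue $2\cos\frac{\pi}{n}$. Only then does it run the three-term recursion forward. It writes the recursion as $[M(l,0)]=[M(2,0)][M(l-1,0)]-[M(1,n-1)][M(l-2,0)]$, which needs $\gamma_{\operatorname{max}}$ extended to a ring homomorphism on all of $R$ via Remark~\ref{4.3}.

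\textbf{Your route.} You leave $t=d_2$ unknown and run the same recursion from $d_0=0$, $d_1=1$. The relation $d_n=0$, coming from projectivity of $M(n,r)$, then forces $U_{n-1}(t/2)=0$, i.e.\ $t=2\cos\frac{k\pi}{n}$. Non-negativity selects $k=1$.

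\textbf{Comparison.}
\begin{itemize}
\item Your argument is self-contained: it uses only Lemma~\ref{4.4}, Lemma~\ref{4.5} and Theorem~\ref{4.2}, and it avoids the spectrum of an $n(n-1)\times n(n-1)$ matrix.
\item Your identity $[M(2,0)][M(l,r)]=[M(l+1,\ast)]+[M(l-1,\ast)]$ involves only non-negative elements. So additivity and multiplicativity on $R_{\geq 0}$ suffice, and you never need the extension to all of $R$.
\item What the paper's route buys is the full spectrum of $[M(2,0)]$. Through Lemma~\ref{4.1} it also identifies $2\cos\frac{\pi}{n}$ as a spectral radius directly, without using the homomorphism property at $l=2$.
\end{itemize}

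\textbf{Strict positivity is unnecessary.} Your justification of $d_l>0$ is also thinner than it looks. In a quotient ring a product of non-negative nonzero elements can vanish; what actually makes $\overline{x_j}\,\overline{u}\neq 0$ is that $\overline{x_j}\,\overline{x_{j^*}}$ contains $\mathbf 1$. The point is moot, though. For $2\le k\le n-1$ the open interval $(n/k,2n/k)$ has length $n/k>1$, so it contains an integer $l$ with $2\le l\le n-1$. For that $l$ we get $d_l=\sin(lk\pi/n)/\sin(k\pi/n)<0$ strictly. This already contradicts the trivial bound $\gamma_{\operatorname{max}}\geq 0$ that follows from the definition.
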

\begin{proof}
We first evaluate $\gamma_{\operatorname{max}}([M(2,0)])$. Let $M$ be the transpose of the left multiplication matrix of $[M(2,0)]$ in $r(H_n(q))$, whose characteristic polynomial is known from \cite{Cao1}. Since $M(n,r)$ ($r \in \mathbb{Z}_n$) are the only indecomposable projective modules, the left multiplication matrix in $r_{st}(H_n(q))$ is simply the transpose of $M'$, the submatrix formed by deleting the last $n$ rows and columns of $M$. 
\[
\renewcommand{\arraycolsep}{3pt}
Z = \begin{pmatrix}
0      & 0      & 0      & \cdots & 0      & 1      \\
1      & 0      & 0      & \cdots & 0      & 0      \\
0      & 1      & 0      & \ddots & 0      & 0      \\
\vdots & \vdots & \ddots & \ddots & \ddots & \vdots \\
0      & 0      & 0      & \ddots & 0      & 0      \\
0      & 0      & 0      & \cdots & 1      & 0
\end{pmatrix}.
\]%
\noindent Based on the approach in \cite{Cao1, Li1}, the characteristic polynomial of $M'$ is 
\[
\prod_{\substack{0 \le k \le n-1 \\ 1 \le j \le n-1}} (\lambda - \sigma_{k,j}),
\]
where $\sigma_{k,j} = 2\eta^k \cos \frac{j\pi}{n}$ and $\eta^k=\cos \frac{k\pi}{n} + i\sin \frac{k\pi}{n}$. The Frobenius-Perron dimension of $[M(2,0)]$ in $r_{st}(H_n(q))$ is therefore $2 \cos \frac{\pi}{n}$, which by Theorem \ref{4.2} implies $\gamma_{\operatorname{max}}([M(2,0)])= 2 \cos \frac{\pi}{n}$.

Next, by Equation \eqref{eq:4.3}, we have $[M(1,r)][M(1,r')] = [M(1,r+r')]$, which implies that $\gamma_{\operatorname{max}}([M(1,r)]) = 1$. For $3 \leqslant l \leqslant n-1$, Equations \eqref{eq:4.1} and \eqref{eq:4.3} imply that $[M(l,0)] = [M(2,0)][M(l-1,0)] - [M(1,n-1)][M(l-2,0)]$. This relation also holds for $l=n$. Since $\gamma_{\operatorname{max}}$ is a ring homomorphism by Remark \ref{4.3}, we obtain
$$\gamma_{\operatorname{max}}([M(l,0)]) = 2\cos \frac{\pi}{n} \gamma_{\operatorname{max}}([M(l-1,0)]) - \gamma_{\operatorname{max}}([M(l-2,0)]).$$
Using the initial value $\gamma_{\operatorname{max}}([M(2,0)])= 2 \cos \frac{\pi}{n}$, the identity $\sin(l\theta) + \sin((l-2)\theta) = 2\cos(\theta)\sin((l-1)\theta)$ for $\theta = \frac{\pi}{n}$ and induction on $l$, we have
$$\gamma_{\operatorname{max}}([M(l,0)]) = \frac{\sin(\frac{l\pi}{n})}{\sin(\frac{\pi}{n})}.$$

Finally, Equation \eqref{eq:4.3} combined with the multiplicativity of $\gamma_{\operatorname{max}}$ gives 
$$\gamma_{\operatorname{max}}([M(l,r)]) = \gamma_{\operatorname{max}}([M(1,r)])\gamma_{\operatorname{max}}([M(l,0)]) = \gamma_{\operatorname{max}}([M(l,0)]).$$
This establishes $\gamma_{\operatorname{max}}([M(l,r)]) = \frac{\sin(\frac{l\pi}{n})}{\sin(\frac{\pi}{n})}$.
\end{proof}

\newpage
\centerline{ACKNOWLEDGMENTS}

This article is supported by the Jiangsu Government Scholarship for Overseas Studies, the BOF of Hasselt University and NSFC (Grant No. 12371041). The first author would like to thank Prof. Huixiang Chen for his invaluable comments.

\end{document}